\newtheorem{theorem}{Theorem}[section]
\newtheorem{lemma}[theorem]{Lemma}
\newtheorem{proposition}[theorem]{Proposition}
\theoremstyle{remark}
\newtheorem{remark}[theorem]{Remark}
\numberwithin{equation}{section}
\newcommand{\eps}{\varepsilon}
\newcommand{\R}{\mathbb{R}}
\newcommand{\F}{\mathcal{F}}
\newcommand{\Pb}{\mathbb{P}}
\newcommand{\E}{\mathbb{E}}
\newcommand{\G}{\mathcal{G}}
\newcommand{\A}{\mathcal{A}}
\def\HMM{\mathrm{HMS}}
\newcommand{\aint}{{\int \hspace{-1.1em}-\hspace{-0.2em}}}
\newcommand{\Iinner}{\mathcal{I}}
\newcommand{\Icomp}{\overline{\mathcal{I}}}
\newcommand{\dind}{\mathrm{d}}
\newcommand{\K}{\mathcal{K}}
\newcommand{\Var}{\mathrm{Var}}
\title{Corrector Analysis of a Heterogeneous Multi-scale Scheme for Elliptic Equations with Random Potential}
\author{Guillaume Bal\thanks{Department of Applied 
Physics and Applied Mathematics, Columbia University, New York, NY 10027. Email: {\tt gb2030@columbia.edu}}
\and Wenjia Jing\thanks{D\'epartement de Math\'ematiques et Applications, Ecole Normale Sup\'erieure, 45 Rue d'Ulm, 75230 Paris Cedex 05, France. Email: {\tt wjing@dma.ens.fr}}}
\date{June 3, 2013}
\begin{document}
\maketitle

\begin{abstract}

This paper analyzes the random fluctuations obtained by a heterogeneous multi-scale first-order finite element method applied to solve elliptic equations with a random potential. Several multi-scale numerical algorithms have been shown to correctly capture the homogenized limit of solutions of elliptic equations with coefficients modeled as stationary and ergodic random fields. Because theoretical results are available in the continuum setting for such equations, we consider here the case of a second-order elliptic equations with random potential in two dimensions of space.

We show that the random fluctuations of such solutions are correctly estimated by the heterogeneous multi-scale algorithm when appropriate fine-scale problems are solved on subsets that cover the whole computational domain. However, when the fine-scale problems are solved over patches that do not cover the entire domain, the random fluctuations may or may not be estimated accurately. In the case of random potentials with short-range interactions, the variance of the random fluctuations is amplified as the inverse of the fraction of the medium covered by the patches. In the case of random potentials with long-range interactions, however, such an amplification does not occur and random fluctuations are correctly captured independent of the (macroscopic) size of the patches. 

These results are consistent with those obtained in \cite{BJ-MMS-11} for more general equations in the one-dimensional setting and provide indications on the loss in accuracy that results from using coarser, and hence computationally less intensive, algorithms.

\end{abstract}

\vskip 1em

{\bf Keywords:} Equations with random coefficients, multi-scale finite element method, heterogeneous multi-scale method, corrector test, long-range correlations.

\vskip 1em

{\bf AMS subject classification (2010):}
35R60, 65N30, 65C99

%%%%%%%%%%%%
%%%%%%%%%%%%
\section{Introduction}
\label{sec:intro}

Differential equations with highly oscillatory coefficients arise naturally in many areas of applied sciences. The microscopic details of such equations are difficult to compute. Nevertheless, when the heterogeneous medium has certain properties involving separation of scales, periodicity, or stationary ergodicity, homogenization theories have been developed and they provide macroscopic models for the heterogeneous equations; see e.g. \cite{JKO-SV-94,Kozlov,PV-81}. Many multi-scale algorithms have been devised to capture as much of the microscopic scale as possible without solving all the details of the micro-structure \cite{AB-MMS-05,A-SINUM-04,EMZ-JAMS-05,EE-HMM-REV,HWC-MC-99}. Such a scheme is viewed as correct if it can well approximate the macroscopic solution when the heterogeneous medium satisfies conditions for homogenization to happen. Homogenization theory thus serves as a benchmark which ensures that the multi-scale scheme performs well in controlled environments, with the hope that it will still perform well in non-controlled environments, for instance when ergodicity and stationarity assumptions are not valid.

In many applications such as parameter estimation and uncertainty quantification, estimating the random fluctuations (finding the random corrector) in the solution is as important as finding its homogenized limit \cite{BR-IP-09,NP-IP-09}. When this is relevant, another benchmark for multi-scale numerical schemes that addresses the limiting stochasticity of the solutions is plausible: One computes the limiting (probability) distribution of the random fluctuation given by the multi-scale algorithm in the limit that the correlation length of the medium tends to $0$ while the discretization size $h$ of the scheme is fixed. If this $h$-dependent distribution converges, as $h\to 0$, to the limiting distribution of the corrector of the continuous equation (before discretization), we deduce that the multi-scale algorithm asymptotically correctly captures the randomness in the solution and passes the random corrector test.

Such proposal requires a controlled environment in which the theory of correctors is available. We introduced and analyzed such a benchmark in \cite{BJ-MMS-11} using an ODE model whose corrector theory was studied in \cite{BP-99,BGMP-AA-08}. The main purpose of this paper is to provide and analyze another benchmark using a PDE model whose corrector theory was studied in \cite{B-CLH-08,FOP-SIAP-82,BGGJ-12}, hence to generalize the main results of \cite{BJ-MMS-11} in higher dimensional spaces. In the rest of this introduction, we first review some main results in \cite{BJ-MMS-11}. Then we introduce the results of the current paper that address the corrector test using an elliptic PDE with random potential.

\vskip 1em

\subsection{Corrector test using an ODE with random elliptic coefficient}

The corrector test is based on the homogenization and corrector theory of the following equation: 
\begin{equation}
\left\{
\begin{aligned}
& - \frac{d}{dx} a(\frac{x}{\eps},\omega) \frac{d}{dx} u_\eps (x, \omega) = 
f(x), \, x \in (0, 1),\\
& u_\eps(0, \omega) = u_\eps (1, \omega) = 0.
\end{aligned}
\right.
\label{eq:rode}
\end{equation}
Here, the diffusion coefficient $a(\frac{x}{\eps},\omega)$ is obtained by rescaling $a(x,\omega)$ which is a random process on some probability space $(\Omega,\F,\Pb)$. It is well known \cite{Kozlov, PV-81} that (and this generalizes to higher dimensions as well) when $a(x,\omega)$ is stationary, ergodic, and uniformly elliptic, then the solution $u_\eps$ converges to the following homogenized equation with deterministic and constant coefficient:
\begin{equation}
\left\{
\begin{aligned}
& - \frac{d}{dx} a^* \frac{d}{dx} u_0 (x) = f(x), \, x \in (0, 
1),\\
& u_0 (0) = u_0 (1) = 0.
\end{aligned}
\right.
\label{eq:hode}
\end{equation}
In the one-dimensional case, the coefficient $a^*$ is the harmonic mean of $a(x,\omega)$, i.e., the inverse of the expectation of $a^{-1}$. We denote by $q(x)$ the deviation of $1/a(x)$ from its mean $1/a^*$. The corrector theories for the limiting distribution of $u_\eps - u_0$ were studied by \cite{BGMP-AA-08,BP-99}. The results in these papers are represented in path ({\it iii}) of the diagram in Fig.~\ref{fig:cdODE}. The limiting distribution showing at the lower-right corner depends on the de-correlation rate of $q(x)$. When $q$ is strongly mixing with integrable mixing coefficient (see \eqref{eq:alphadef} below), then $\beta = 1$ and $W^\beta$ is a  standard Brownian motion multiplied by $\sigma$, a factor determined by the correlation function of $q$ as detailed in \eqref{eq:sigdef} below. When $q$ has a heavy tail (is long-range) in the sense of (L1-L3) in section \ref{sec:continuous}, we should take $\beta = \alpha$, $\alpha<1$ being defined in \eqref{eq:Rgdef}, and $W^\alpha$ is the fractional Brownian motion with Hurst index $1- {\frac \alpha 2}$ multiplied by certain factor. These convergence results are understood as convergence in distribution in the space of continuous paths $\mathcal{C}([0,1])$.

The corrector test for multi-scale numerical schemes is therefore the following: Let $h$ be the discretization size and $u^h_\eps(x)$ the solution to \eqref{eq:rode} yielded by the scheme. Let $u^h_0(x)$ be the solution yielded by the same scheme applied to \eqref{eq:hode}. The discrete corrector is $u^h_\eps - u^h_0$. According to the de-correlation property of $q(x)$, we choose $\eps^\beta$ and interpret $W^\beta$ as before. We say that a numerical procedure is consistent with the corrector theory and that it passes the corrector test when the diagram in Fig.~\ref{fig:cdODE} commutes:
\begin{figure}[h]
\caption{\label{fig:cdODE} \it A diagram describing the corrector test with a random ODE.}
\begin{equation*}
\begin{CD}
   {\displaystyle \frac{u^h_\eps - u^h_0}{\sqrt{\eps^\beta}}(x,\omega)}
   @>{h \to 0}>(i)> {\displaystyle \frac{u_\eps - 
   u_0}{\sqrt{\eps^\beta}}(x,\omega)}\\
   @V {\eps \to 0}V(ii)V  @V(iii)V{\eps \to 0}V\\
   \displaystyle \int L^h(x,y) dW^\beta(y) @>{h \to 0}>(iv)> 
   %{\U_{\alpha \wedge 1} (x; W^{\alpha \wedge 1})}.
   \displaystyle \int (a^*)^2 \frac{\partial G}{\partial y}(x,y) \frac{\partial u}{\partial y}(y) dW^\beta(y).
 \end{CD}
%\label{eq:cd}
\end{equation*}
\end{figure}

More precisely, we need to characterize the intermediate limit in path ($ii$) which appears on the left of the diagram. In this step, $h$ is fixed while the correlation length $\eps$ is sent to zero. The intermediate limit distribution is $h$-dependent. Very often, it can be described as a stochastic integral as shown and we need to determine the kernel function $L^h(x,y)$. Next, we need to verify the converge path ($iv$) which is taken as $h \to 0$. The numerical scheme is said to pass (or fail) the corrector test if this limit holds (or does not).

In \cite{BJ-MMS-11}, we considered a {\it Finite Element Method} (FEM) based scheme in the framework of {\it Heterogeneous Multiscale Methods} (HMM), which is a general methodology for designing sublinear algorithms for multi-scale problems by exploiting special features of the problem, e.g. scale separation \cite{EMZ-JAMS-05}. The macro-solver of this FEM-HMM scheme uses the standard P1 element on a uniform grid of size $h$. The corresponding discrete bilinear form which approximates the continuous bilinear form associated to \eqref{eq:rode} is
\begin{equation}
\mathcal{A}^h(u^h,v^h) = \sum_{j=1}^N \frac{du^h}{dx}(x_j) a^* \frac{dv^h}{dx}(x_j) h \approx \int_0^1 \frac{du^h}{dx}(x) a^* \frac{dv^h}{dx}(x) dx = :\mathcal{A}(u^h,v^h).
\label{eq:Ahode}
\end{equation}
Here, a simple middle-point quadrature is used for the integral and $x_j$, $j=1,\cdots, N=1/h$ are the evaluation points. Since the effective coefficient $a^*$ is unknown apriori, the FEM-HMM scheme approximates the discrete integrand by 
\begin{equation*}
\frac{du^h}{dx}(x_j) a^* \frac{dv^h}{dx}(x_j) \approx \frac{1}{\delta} \int_{I_{j\delta}} \frac{d \tilde{u}^h}{dx} (x) a_\eps(x) \frac{d\tilde{v}^h}{dx} (x) dx, 
\end{equation*}
where $I_{j\delta} = (x_j-\delta/2,x_j+\delta/2)$ is a patch inside the discretization interval $I_j = (x_j-h/2,x_j+h/2)$; the functions $\tilde{u}^h$ and $\tilde{v}^h$ are given in terms of $\{\tilde{\phi}^j\}$ where $\{\phi^j\}$ are the nodal bases and $\{\tilde{\phi}^j\}$ are given by the micro-solver 
\begin{equation}
\left\{
\begin{aligned}
&-\frac{d}{dx} a_\eps(x) \frac{d}{dx} \tilde{\phi}_j(x) = 0, & & x \in I_{j\delta},\\
&\tilde{\phi}_j(x) = \phi_j(x), & &x \in \partial I_{j\delta}.
\end{aligned}
\right.
\label{eq:micro}
\end{equation}
When $\delta=h$, this scheme coincides with those in \cite{HWC-MC-99, AB-MMS-05}. It is known that one can choose $\delta < h$ to greatly reduce computational cost while still approximating the macroscopic solution quite well \cite{EMZ-JAMS-05}.

The main result of \cite{BJ-MMS-11} shows that the corrector test for the above FEM-HMM scheme depends on the correlation structure of the random media. More precisely, for a long range correlated media (L1-L3 in section \ref{sec:randf}), the scheme is robust for the corrector test: the final limit in path ({\it iv}) of the diagram in Fig.\ref{fig:cdODE} agrees with the theoretical Gaussian limit for all $\delta \le h$. For a short range correlated media (S1-S3 in section \ref{sec:randf}), however, this holds true only for $\delta=h$. The final limit for $\delta < h$ is an amplified version of the theoretical Gaussian limit with an amplification factor $(h/\delta)^{1/2}$, which shows that reducing the computational cost results in an amplification of the variance of the numerical calculations.

\subsection{Corrector test using elliptic PDE with random potential}
\label{sec:prepde}
The main objective of this paper is to provide a two dimensional corrector test. Such a strategy generalizes to arbitrary space dimensions, although for concreteness, we concentrate on the two-dimensional setting. A full theory of random fluctuations for second order elliptic PDE with highly oscillating random diffusion coefficients in dimension higher than one remains open and we can not use it for the corrector test. Instead, we base the test on the following elliptic equation with random potential:
\begin{equation}
  \left\{
  \begin{aligned}
    &-\Delta u_\eps + (q_0 + q_\eps) u_\eps(x,\omega) = f, & 
    &x \in Y,\\
    &u_\eps(x,\omega) = 0, & &x \in \partial Y.
  \end{aligned}
  \right.
  \label{eq:rpde}
\end{equation}
The coefficient in the potential term consists of a smooth varying function $q_0$, and a highly oscillatory random function $q(\eps^{-1}x,\omega)$ denoted by $q_\eps(x)$ for simplicity. The random field $q(x,\omega)$ is assumed to be stationary ergodic and mean-zero. When $\eps$ goes to zero, the solution $u_\eps$ converges in $L^2(\Omega\times Y)$ to the homogenized solution $u_0$ that 
solves
\begin{equation}
  \left\{
  \begin{aligned}
    &-\Delta u_0 + q_0 u_0(x) = f, & &x \in Y,\\
    &u_0(x) = 0, & &x \in \partial Y.
  \end{aligned}
  \right.
  \label{eq:hpde}
\end{equation}
\begin{figure}[t]
\caption{\label{fig:cdPDE} \it A diagram describing the corrector test with a random PDE.}
\begin{equation*}
\begin{CD}
   {\displaystyle \left\langle \frac{u^{h,\delta}_\eps - u^{h,\delta}_0}{\sqrt{\eps^\beta}}(x,\omega), \varphi\right\rangle}
   @>{h = \delta \to 0}>(i)> {\displaystyle \left\langle \frac{u_\eps - 
   u_0}{\sqrt{\eps^\beta}}(x,\omega), \varphi \right\rangle}\\
   @V {\substack{\eps \to 0\\h,\delta \text{ fixed}\\ \delta\le h}}V(ii)V  @V(iii)V{\eps \to 0}V\\
   \displaystyle \int_Y L^{h,\delta}[\varphi](x,y) dW^\beta(y) @>{\substack{h \to 0\\ \frac{h}{\delta} \text{ fixed}}}>(iv)> 
   %{\U_{\alpha \wedge 1} (x; W^{\alpha \wedge 1})}.
   \displaystyle \int_Y \varphi(x)G(x,y)u_0(y) dW^\beta(y).
 \end{CD}
%\label{eq:cd}
\end{equation*}
\end{figure}
The corrector theory for the above homogenization is well understood; see \cite{FOP-SIAP-82,B-CLH-08,BGGJ-12}. When the corrector $u_\eps-u_0$ is properly scaled, it converges to a stochastic integral in a weak sense. This is described by the path ({\it iii}) of the diagram in Fig.~\ref{fig:cdPDE}. Both the scaling factor and the limit depend on the correlation structure of the random field. These results are reviewed in Section \ref{sec:continuous} below. As in the ODE (one-dimensional) setting, a corrector test can be sketched as in the diagram of Fig.~\ref{fig:cdPDE}. For a given multi-scale scheme, which yields $u^{h,\delta}_\eps$ and $u^{h,\delta}_0$ when it is applied to \eqref{eq:rpde} and \eqref{eq:hpde}, respectively, the main tasks are again to characterize the intermediate convergence in path ({\it ii}) where $\eps$ is sent to zero first while the parameters $h$ and $\delta$ of the scheme are fixed, and to check the validity of path ({\it iv}) where $h$ and $\delta$ are sent to zero afterwards.

%% illustration of the triangulation
\begin{figure}[hb]
\caption{\label{fig:tri} \it Left: Triangulation of the unit square. Right: Shrinking from $K$ to $K_\delta$ with respect to the barycenter.}
\setlength{\unitlength}{0.7cm}
\begin{center}
\begin{picture}(10,10)
\put(1,1){\vector(1,0){9}}
\put(1,1){\vector(0,1){9}}
\multiput(1,3)(0,2){4}{\line(1,0){8}}
\multiput(3,1)(2,0){4}{\line(0,1){8}}
\put(1,1){\line(1,1){8}}
\multiput(3,1)(-2,2){2}{\line(1,1){6}}
\multiput(5,1)(-4,4){2}{\line(1,1){4}}
\multiput(7,1)(-6,6){2}{\line(1,1){2}}
\put(9.6, 0.6){$x$}
\put(0.5,9.7){$y$}
\put(2.8,0.6){$x_1$}
\put(4.8,0.6){$x_i$}
\put(8.7,0.6){$x_N$}
\put(0.4,2.8){$y_1$}
\put(0.4,4.8){$y_j$}
\put(5,5){\circle*{0.15}}
\put(0.3,8.8){$y_N$}
\end{picture}
\begin{picture}(6,10)
\put(3,4){\line(1,0){3}}
\put(3,4){\line(0,1){3}}
\put(3,7){\line(1,-1){3}}
\put(4,5){\circle*{0.15}}
\put(3.67,4.67){\line(1,0){1}}
\put(3.67,4.67){\line(0,1){1}}
\put(3.67,5.67){\line(1,-1){1}}
\multiput(4,4.9)(0,-0.35){3}{\line(0,-1){0.2}}
\multiput(3.9,5)(-0.35,0){3}{\line(-1,0){0.2}}
\multiput(3.57,4.67)(-0.35,0){2}{\line(-1,0){0.2}}
\multiput(3.67,4.57)(0,-0.35){2}{\line(0,-1){0.2}}
\multiput(3.57,5.67)(-0.35,0){2}{\line(-1,0){0.2}}
\multiput(4.67,4.57)(0,-0.35){2}{\line(0,-1){0.2}}
\put(2.9,3.5){$0$}
\put(3.9,3.4){$\frac{h}{3}$}
\put(5.9,3.5){$h$}
\put(1.9,4.5){$\frac{h-\delta}{3}$}
\put(1.9,5.5){$\frac{h+2\delta}{3}$}
\put(2.6,6.8){$h$}
\end{picture}
\end{center}
\end{figure}

Now we introduce a heterogeneous multi-scale scheme for \eqref{eq:rpde}. The weak formulation of the equation is to find $u_\eps$ in the Sobolev space $H_0^1(Y)$ so that $\A_\eps(u_\eps, v) = \langle f, v \rangle$ for all $v \in H^1_0(Y)$. Here and below, $\langle \cdot, \cdot \rangle$ denotes the usual pairing; $\A_\eps$ is the bilinear form
\begin{equation}
\A_\eps(u,v) = \int_Y \nabla u\cdot \nabla v + (q_0 + q_\eps) uv \ dx, \quad \forall u, v \in H^1_0(Y).
\label{eq:Adef}
\end{equation}
Since we always assume that $q_0 + q_\eps$ is positive, the weak formulation is well-posed thanks to the Lax-Milgram lemma. The scheme that will be considered is based on FEM. For simplicity, $Y$ is taken as the two dimensional unit square $(0,1)^2$.  Let $\mathcal{T}_h$ be the standard uniform triangulation as illustrated in Fig.~\ref{fig:tri}. Here, the typical length of the triangles is $h=1/N$ and $N$ is the number of partitions on the axes. We consider first-order Lagrange elements. Associated to each (interior) nodal point $(ih,jh)$, there is a continuous function $\phi^{ij}$ which is linear polynomial restricted to each triangle $K \in \mathcal{T}_h$ and which has value one at this nodal point and has value zero at all other nodal points. Note that the index $i,j$ runs from $1$ to $N-1$. The space $V^h$ spanned by $\{\phi^{ij}\}$ is a finite dimensional subspace of $H_0^1(Y)$. The heterogeneous multi-scale scheme for \eqref{eq:rpde} is to find $u^{h,\delta}_\eps \in V^h$ that satisfies
\begin{equation}
\A^{h,\delta}_\eps (u^{h,\delta}_\eps, v^h) = \langle f, v^h \rangle, \quad \text{ for all } v^h \in V^h,
\label{eq:hmsdef}
\end{equation}
where $\A^{h,\delta}_\eps$ is a bilinear form on $V^h\times V^h$ which approximates $\A_\eps$ as follows:
\begin{equation}
\A^{h,\delta}_\eps (u^h, v^h) := \sum_{K \in \mathcal{T}_h} |K| \left( \frac{1}{|K_\delta|} \int_{K_\delta} \nabla u^h\cdot \nabla v^h + (q_0 + q_\eps) u^h v^h \ dx \right).
\label{eq:Ahddef}
\end{equation}
Here, $K_\delta \subset K$ is a patch centered at the barycenter of $K$ and has typical length $\delta$ (see the remark below); the symbol $|\cdot|$ means taking the area. $\A^{h,\delta}_\eps$ can be viewed as a numerical quadrature for the integral in \eqref{eq:Adef} using averaged value around the barycenters of the elements. The scheme \eqref{eq:hmsdef} is analyzed in Section \ref{sec:analysis} and it is well-posed. 

When the above scheme is applied to the homogenized equation \eqref{eq:hpde}, it yields a solution $u^{h,\delta}_0$ in $V^h$ so that
\begin{equation}
\A^{h,\delta}_0 (u^{h,\delta}_0, v^h) = \langle f, v^h \rangle, \quad \text{ for all } v^h \in V^h,
\label{eq:uh0def}
\end{equation}
and $\A^{h,\delta}_0$ is given by
\begin{equation*}
\A^{h,\delta}_0 (u^h, v^h) := \sum_{K \in \mathcal{T}_h} |K| \left( \frac{1}{|K_\delta|} \int_{K_\delta} \nabla u^h\cdot \nabla v^h + q_0 u^h v^h \ dx \right).
\end{equation*}
The discrete corrector function is defined to be the difference between $u^{h,\delta}_\eps$ and $u^{h,\delta}_0$. 
\begin{remark} \label{rem:hod} The patch $K_\delta$ is the two dimensional analog of $I_\delta$ in the aforementioned FEM-HMM scheme for the ODE setting. The ratio $|K_\delta|/|K|$ hence measures savings in the computational cost. As in the ODE setting, we expect the corrector test to depend on the ratios, say in the SRC setting. To simplify notations, we assume that $K_\delta$ is chosen in the following way: Consider a typical triangle $K$ with vertices $(0,0),(h,0)$ and $(0,h)$. $K_\delta$ is obtained by shrinking $K$ with respect to the barycenter $(h/3,h/3)$ so that it has vertices $((h-\delta)/3,(h-\delta)/3), ((h+2\delta)/3,(h-\delta)/3)$ and $((h-\delta)/3, (h+2\delta)/3)$; see Fig.~\ref{fig:tri}. Consequently we have $|K_\delta|/|K|=(\delta/h)^d$ with $d=2$. More general patches than those of the paper could also be considered without changing our main conclusions. Throughout this paper, we assume that the parameters $h$ and $\delta$, which obviously satisfies $\delta \le h$ from the above construction of $K_\delta$, are much larger than the correlation length $\eps$ of the random field so that mixing happens in the integrals of \eqref{eq:hmsdef}. Further comments on the numerical scheme can be found in section \ref{sec:fdiss} below.
\end{remark}
%%%%%%%%%%%%%%%%%%
\subsection{Main Results}
\label{sec:main}

The main results of this paper concern the limiting distribution of the discrete corrector $u^{h,\delta}_\eps - u^{h,\delta}_0$ with proper scaling. They depend on the correlation structure of the random field $q_\eps$. We refer to section \ref{sec:randf} below for notation. In particular, SRC (respectively LRC) stands for short (respectively long) range correlation.
 
\begin{theorem} \label{thm:main} Let $u^{h,\delta}_\eps$ and $u^{h,\delta}_0$ be the solutions obtained from the heterogeneous multi-scale schemes \eqref{eq:hmsdef} and \eqref{eq:uh0def}, respectively.  Assume that $q_0 \in \mathcal{C}^1(\overline{Y})$ is positive and $f$ is in $\mathcal{C}^2(\overline{Y})$. For an arbitrary test function $\varphi \in \mathcal{C}^2(\overline{Y})$, the following holds.
\begin{enumerate}
\item[\upshape(1)] In the SRC setting, i.e.~ assume that the random field $q(x,\omega)$ satisfies {\upshape(S1)(S2)(S3)}  of section \ref{sec:randf}. Let $\sigma$ be defined by \eqref{eq:sigdef} and $L^{h,\delta}[\varphi]$ be the bounded function defined in \eqref{eq:Lhdef} below. Then as $\eps$ goes to zero while $h$ and $\delta$ with $\delta \le h$ are kept fixed, we have
\begin{equation}
\frac{1}{\sqrt{\eps^d}} \int_Y \varphi(x)[u^{h,\delta}_\eps-u^{h,\delta}_0] dx \xrightarrow[\eps \to 
0]{\mathrm{distribution}} \sigma \int_Y L^{h,\delta}[\varphi](x) d W(x),
\label{eq:main1}
\end{equation}
where $W$ is the standard multi-parameter Wiener process.
\item[\upshape(2)] Assume the same setting in {\upshape(1)}. Let $\G$ be the solution operator of \eqref{eq:hpde}. Then as $h$ and $\delta$ go to zero with the ratio $\delta/h$ being fixed, we have
\begin{equation}
\sigma \int_Y L^{h,\delta}[\varphi](x) d W(x) \xrightarrow[h \to 0]{\mathrm{distribution}} {\frac h \delta} \sigma \int_Y \G\varphi(x) u_0(x) dW(x).
\label{eq:main2}
\end{equation}

\item[\upshape(3)] In the LRC setting, i.e.~assume that $q(x,\omega)$ satisfies {\upshape(L1)(L2)(L3)} of section \ref{sec:randf}. Let $\kappa$ be defined as in the line after \eqref{eq:Hrone}.Then convergence results in item {\itshape(1)} and {\itshape(2)} are replaced by
\begin{equation}
\frac{1}{\sqrt{\eps^\alpha}} \int_Y \varphi(x)[u^{h,\delta}_\eps-u^{h,\delta}_0] dx \xrightarrow[\eps \to 0]{\mathrm{distribution}} \sqrt{\kappa} \int_Y L^{h,\delta}[\varphi](x) W^\alpha(dx),
\label{eq:main3}
\end{equation}
and
\begin{equation}
\sqrt{\kappa} \int_Y L^{h,\delta}[\varphi](x) W^\alpha(dx) \xrightarrow[h \to 0]{\mathrm{distribution}} 
\sqrt{\kappa}\int_Y \G \varphi(x) u_0(x) W^\alpha(dx),
\label{eq:main4}
\end{equation}
\end{enumerate}
where $W^\alpha(dy)$ is formally defined to be $\widetilde{W}^\alpha(y) dy$ and $\widetilde{W}^\alpha(y)$ is a Gaussian random field with covariance function given by $\E\{\widetilde{W}^\alpha(x)\widetilde{W}^\alpha(y)\} = |x-y|^{-\alpha}$.
\end{theorem}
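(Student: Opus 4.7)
The plan is as follows. Subtracting \eqref{eq:uh0def} from \eqref{eq:hmsdef}, the discrete corrector $u^{h,\delta}_\eps - u^{h,\delta}_0 \in V^h$ satisfies
\begin{equation*}
\A^{h,\delta}_0\bigl(u^{h,\delta}_\eps - u^{h,\delta}_0,\, v^h\bigr) = -\sum_{K \in \mathcal{T}_h}\frac{|K|}{|K_\delta|}\int_{K_\delta} q_\eps\, u^{h,\delta}_\eps\, v^h\, dx, \qquad \forall\, v^h \in V^h.
\end{equation*}
Define the discrete adjoint $v^{h,\delta}_\varphi \in V^h$ by $\A^{h,\delta}_0(v^{h,\delta}_\varphi, w^h) = \int_Y \varphi\, w^h\, dx$ for every $w^h \in V^h$. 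Testing the identity above with $v^{h,\delta}_\varphi$ and using symmetry of $\A^{h,\delta}_0$, I obtain the exact representation
\begin{equation*}
\int_Y \varphi(x)\bigl(u^{h,\delta}_\eps - u^{h,\delta}_0\bigr)(x)\, dx = -\int_Y q_\eps(x)\, L^{h,\delta}[\varphi](x)\, dx + R_\eps^{h,\delta},
\end{equation*}
where $L^{h,\delta}[\varphi](x) := \sum_K \frac{|K|}{|K_\delta|}\mathbf{1}_{K_\delta}(x)\, u^{h,\delta}_0(x)\, v^{h,\delta}_\varphi(x)$, and $R_\eps^{h,\delta}$ is the remainder produced by the substitution $u^{h,\delta}_\eps \mapsto u^{h,\delta}_0$ in the stochastic integrand; since $u^{h,\delta}_\eps - u^{h,\delta}_0$ itself scales with $q_\eps$, $R_\eps^{h,\delta}$ is formally quadratic in $q_\eps$.

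For Part (1), one shows $\E[(R_\eps^{h,\delta})^2]^{1/2} = O(\eps^d)$ for fixed $h,\delta$, by combining a CLT-type bound on the oscillatory integrals over each patch with the discrete energy estimate on $u^{h,\delta}_\eps - u^{h,\delta}_0$ from Section \ref{sec:analysis}, so $\eps^{-d/2}R_\eps^{h,\delta}\to 0$. The leading term is a linear functional of the mean-zero stationary field $q_\eps$ against the deterministic, bounded, piecewise-polynomial function $L^{h,\delta}[\varphi]$, supported on the finitely many disjoint patches $\{K_\delta\}$ of macroscopic size $\delta$. The standard CLT for strongly mixing stationary fields under the integrable mixing hypothesis (S3) then gives
\begin{equation*}
\eps^{-d/2}\int_Y q_\eps(x)\, L^{h,\delta}[\varphi](x)\, dx \xrightarrow[\eps\to 0]{\mathrm{dist.}} \sigma \int_Y L^{h,\delta}[\varphi](x)\, dW(x),
\end{equation*}
which is \eqref{eq:main1}.

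For Part (2), both sides of \eqref{eq:main2} are centered Gaussian, so it suffices to pass to the limit in variances. Disjointness of the $K_\delta$ yields
\begin{equation*}
\int_Y \bigl(L^{h,\delta}[\varphi]\bigr)^2 dx = \left(\frac{h}{\delta}\right)^d \sum_K |K|\cdot\frac{1}{|K_\delta|}\int_{K_\delta}\bigl(u^{h,\delta}_0 v^{h,\delta}_\varphi\bigr)^2 dx.
\end{equation*}
Standard finite element consistency of $\A^{h,\delta}_0$ with the bilinear form associated to \eqref{eq:hpde} (no random potential is present here) gives uniform convergence $u^{h,\delta}_0 \to u_0$ and $v^{h,\delta}_\varphi \to \G\varphi$ as $h\to 0$ with $\delta/h$ held fixed. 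The right-hand side above is then a Riemann sum converging to $(h/\delta)^d \int_Y (u_0\,\G\varphi)^2 dx$, and with $d=2$ this matches the variance $(h/\delta)^2 \sigma^2\|u_0\,\G\varphi\|_{L^2(Y)}^2$ of the stochastic integral on the right of \eqref{eq:main2}, producing the amplification factor $h/\delta$.

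For Part (3), the representation and remainder control carry over (now $\E[(R_\eps^{h,\delta})^2]^{1/2} = O(\eps^\alpha)$), while the SRC mixing CLT is replaced by the LRC convergence $\eps^{-\alpha/2}\int_Y q_\eps g\, dx \to \sqrt{\kappa}\int_Y g\, W^\alpha(dx)$ for bounded smooth $g$, valid under (L1)--(L3); this is exactly the mechanism used in the continuous corrector theory reviewed in Section \ref{sec:continuous}. Applied with $g=L^{h,\delta}[\varphi]$, it yields \eqref{eq:main3}. For \eqref{eq:main4}, the covariance of $\sqrt{\kappa}\int_Y L^{h,\delta}[\varphi] W^\alpha(dx)$ equals the Riesz-type energy $\kappa \int_Y\int_Y L^{h,\delta}[\varphi](x) L^{h,\delta}[\varphi](y)|x-y|^{-\alpha} dxdy$. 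The key observation is that the $L^1$-mass of $L^{h,\delta}[\varphi]$ on each $K_\delta$ equals $\int_K u^{h,\delta}_0 v^{h,\delta}_\varphi\, dx + o(|K|)$, so the $|K|/|K_\delta|$ prefactors are absorbed and $L^{h,\delta}[\varphi]\, dx$ converges weakly as a measure to $u_0\,\G\varphi\, dx$. Since $|x-y|^{-\alpha}$ with $\alpha<2=d$ is locally integrable, the covariance passes to the limit $\kappa \int_Y\int_Y u_0(x)\G\varphi(x) u_0(y)\G\varphi(y)|x-y|^{-\alpha} dxdy$ without any amplification factor. The main obstacle I anticipate is controlling the rescaled remainder $\eps^{-\alpha/2}R_\eps^{h,\delta}$ (and its SRC analog $\eps^{-d/2}R_\eps^{h,\delta}$): the natural one-iteration bound barely beats the required rate, especially in the LRC regime with $\alpha<1$, so a careful bootstrap using the discrete $L^\infty$-stability of $(\A^{h,\delta}_0)^{-1}$ together with moment estimates on $q_\eps$ averaged over macroscopic patches is required. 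A secondary concern specific to Part (3) is the lack of asymptotic independence across patches: contributions from distinct $K_\delta$ remain coupled through the power-law tail of the covariance, so convergence must be established at the level of the full stochastic integral rather than patch by patch.
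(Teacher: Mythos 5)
Your representation of the discrete corrector is correct and in substance equivalent to the paper's, but you arrive at it by a genuinely different and cleaner route. The paper works at the level of coordinates: it establishes the conservative structure of the stiffness matrix (Proposition \ref{prop:stiff}), inverts $A^0$, sums by parts, and obtains the decomposition \eqref{eq:decomp}, whose leading part is $\int_Y q_\eps (L^{h,\delta}_1+L^{h,\delta}_2)\,dx$ with $L^{h,\delta}_1$ a gradient-type term that must later be shown to vanish in $L^\infty$ as $h\to0$. You instead test the error equation with the discrete adjoint $v^{h,\delta}_\varphi$ and use symmetry of $\A^{h,\delta}_0$, which produces in one line the exact identity with kernel $-\sum_K\frac{|K|}{|K_\delta|}\chi_{K_\delta}\,u^{h,\delta}_0 v^{h,\delta}_\varphi$; on each triangle the identity $uv=\varPi^h(uv)-\sum_{i<j}(u_i-u_j)(v_i-v_j)\phi^i\phi^j$ for $u,v\in P_1(K)$ shows this equals $-(L^{h,\delta}_1+L^{h,\delta}_2)$ of \eqref{eq:Lhdef}, so your $L$ is the paper's up to an irrelevant sign and you never need the separate observation that $L^{h,\delta}_1\to0$. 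Your remainder control (quadratic in $q_\eps$, Cauchy--Schwarz against the energy estimate on $U^\eps-U^0$) is exactly the content of Lemmas \ref{lem:UepsMU0}--\ref{lem:rem}; note that a single iteration already gives $\E|R^{h,\delta}_\eps|=O(\eps^\beta)=o(\eps^{\beta/2})$ in both settings, so the bootstrap you anticipate is unnecessary, and the claim $\E[(R^{h,\delta}_\eps)^2]^{1/2}=O(\eps^d)$ should be weakened to a first-moment bound unless you control fourth moments of $q$. Part (2) then coincides with the paper's variance computation, and the $\eps\to0$ limits rest, as in the paper, on the known limit theorems for oscillatory integrals of $q_\eps$ against fixed bounded functions.

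The one step that does not go through as written is the LRC limit \eqref{eq:main4}. Weak-$*$ convergence of the measures $L^{h,\delta}[\varphi]\,dx$ together with local integrability of $|x-y|^{-\alpha}$ does not by itself imply convergence of the energy $\iint L(x)L(y)|x-y|^{-\alpha}\,dx\,dy$: the kernel is unbounded on the diagonal while the densities blow up like $(h/\delta)^d$ on the patches, so the near-diagonal (same-patch and neighboring-patch) contributions must be estimated separately. They do vanish --- the same-patch blocks contribute at most $O(h^{-d}\cdot(h/\delta)^{2d}\delta^{2d-\alpha})=O(h^{d-\alpha}(h/\delta)^{\alpha})\to0$ since $\alpha<d$ and $h/\delta$ is fixed --- but this needs to be proved, not inferred from weak convergence. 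The paper's route avoids the issue by writing the variance as a barycentric quadrature of $\mathscr{I}(\varPi^h(m^h u^{h,\delta}_0))$, in which the $|K|/|K_\delta|$ factors cancel exactly, and then invoking the Hardy--Littlewood--Sobolev continuity \eqref{eq:SIcont} of $\mathscr{I}$ on $L^2$ together with Lemma \ref{lem:hmscov}. Either fix is acceptable, but your argument as stated has a gap at precisely this point.
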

\begin{remark}\label{rem:continuous}
We refer the reader to \cite{Khoshnevisan} for theories of stochastic integrals with respect to multi-parameter random processes. In fact, the limits above can be written as the following Gaussian distributions:
\begin{eqnarray}
\sigma \int_Y \G \varphi(x) u_0(x) dW(x) &&\overset{\mathrm{distribution}}{=}\mathcal{N}(0,\sigma^2 \|u_0 \G \varphi\|^2_{L^2} ),\label{eq:continuous11}\\
\sqrt{\kappa} \int_Y \G\varphi(x) u_0(x)  W^\alpha(dx) &&\overset{\mathrm{distribution}}{=} \mathcal{N}(0,\int_{Y^2} \frac{\kappa (u_0 \G \varphi) \otimes (u_0 \G \varphi)}{|x-y|^\alpha} dx dy).
\label{eq:continuous21}
\end{eqnarray}
\end{remark}
%%%%%%%%%
Comparing these results with Theorem \ref{thm:continuous} below which recalls the theory of random fluctuations in the continuous setting, and with the paths in Fig.~\ref{fig:cdPDE},  we find in the LRC setting that the multi-scale scheme \eqref{eq:hmsdef} captures the theoretical Gaussian limit fluctuations after $\eps$ and $h$ are successively sent to zero. Furthermore, the scheme is robust in the sense that it provides the correct fluctuations for arbitrary small patches with $0<\delta<h$ (both being independent of and hence much larger than $\eps$). For SRC medium, however, the correct limit for the random fluctuations is captured only  when $\delta=h$, that is $K_\delta = K$ for all $K \in \mathcal{T}_h$. The amplification effect in the case of $\delta<h$ is again characterized by $(h/\delta)^{\frac d 2}$. The main results hence generalize the findings of \cite{BJ-MMS-11} to a higher dimensional setting.
\begin{remark}
The main results are stated under the assumptions in Remark \ref{rem:hod}. When the ratios $\{|K|/|K_\delta|\}$ are not uniform over $\mathcal{T}_h$, the limit in \eqref{eq:main2} does not have a simple form and must account for the non-uniform amplification factors over different triangulation elements. Nevertheless, the main conclusions in the above result are not modified. This remark applies to the ODE setting in \cite{BJ-MMS-11} also.
\end{remark}
%%%%%%%%%%%%%%%%%%%%%%%

The rest of this paper is devoted to the proof of the main theorem. Preliminary material on random fields and the corrector theory in the continuous scale are provided in Section \ref{sec:continuous}. Then main ingredient of the proof is a conservative structure of the stiffness matrix associated to the multi-scale scheme; this is considered in section \ref{sec:analysis}. Similar structures have been observed and explored in other settings \cite{HWC-MC-99,BJ-MMS-11}. It allows us to write the discrete corrector in the form of oscillatory random integrals. Their limiting distributions are then characterized using well established techniques in \cite{FOP-SIAP-82,B-CLH-08,BGGJ-12}. This is done in Section \ref{sec:proof}. These sections also include some useful results on the scheme, such as the $H^1$ estimate of the solution to \eqref{eq:hmsdef}, which are interesting in their own right. We conclude this introduction by several comments.

\subsection{Further Discussions}
\label{sec:fdiss}

This paper studies the specific multi-scale scheme \eqref{eq:hmsdef} for the elliptic equation \eqref{eq:rpde} with a random potential. The analysis takes advantage of the conservative structure of the stiffness matrix. We refer to Proposition \ref{prop:stiff} below for a detailed statement. Other schemes possessing this property can be analyzed similarly. To simplify the presentation, we considered first-order nodal basis on a uniform triangulation. For higher order schemes in which basis functions occupy larger sub-domain of $Y$, and for general regular triangulation where different nodal basis may occupy different number of triangles, the structure in the stiffness matrix is more complicated. Nevertheless, we believe that  the analysis should extend without major differences to this more general setting. 

The scheme \eqref{eq:hmsdef} fits within the framework of HMM, which is a general methodology for designing multi-scale methods by exploiting scale separation and other special features of the problem. We refer to \cite{EMZ-JAMS-05} for references on this method applied to the following $\mathcal{L}_\eps$-problem:
\begin{equation*}
\left\{
\begin{aligned}
\mathcal{L}_\eps u_\eps(x,\omega) = \sum_{\alpha, \beta = 1}^d \frac{\partial}{\partial x_\alpha} \left(a_{\alpha\beta}(\frac{x}{\eps},\omega) \frac{\partial}{\partial x_\beta} u_\eps(x,\omega)\right) = f, &\quad& x \in Y,\\
u_\eps(x,\omega) = 0, &\quad& x \in \partial Y.
\end{aligned}
\right.
\end{equation*}
This problem is the higher dimensional version of \eqref{eq:rode}. Like the treatment there, the macro-solver is a conventional FEM on the triangulation $\mathcal{T}_h$ as for the homogenized equation. The missing effective stiffness matrix is approximated by solving a fine-scale problem on $K_\delta$. The problem \eqref{eq:rpde} considered in this paper is much easier. Indeed, the homogenized coefficient of \eqref{eq:rpde} is simply an average of $q^\eps$, whereas that of the $\mathcal{L}_\eps$-problem involves some auxiliary problem and is highly non-trivial; see \cite{Kozlov,PV-81}. In particular, the missing part of the macroscopic effective stiffness matrix for \eqref{eq:rpde} is just the integral of the zeroth order term, i.e.~$\sum_{K\in \mathcal{T}_h} |K| q_0 u^h v^h(x_K)$, say when barycenter numerical quadrature is used for the integrals. In the scheme \eqref{eq:hmsdef}, this missing datum $q_0 u^h v^h(x_K)$ are supplied by averaging $q^\eps u^h v^h$ around the barycenter $x_K$. Consequently, in the scheme of this paper, the macro-solver is the standard $P_1$ FEM on $\mathcal{T}_h$ and the micro-level computation is simply a fine-scale average on $K_\delta$. Though this scheme is very simple, our results show that it captures the homogenization and corrector effectively.

The amplification effect of the HMM scheme \eqref{eq:hmsdef} with $\delta < h$ in the SRC setting can be remedied as follows: On a typical triangle element $K \in \mathcal{T}_h$, instead of using one patch $K_\delta$, one may cover $K$ by a number of patches $\{K^i_\delta~|~ i=1,\cdots, (\frac{h}{\delta})^2\}$ for certain $\delta$ and average $q^\eps u^h v^h$ on these patches in parallel, and then combine them to approximate the effective integral of $q_0 u^h v^h$. Essentially this recovers the scheme \eqref{eq:hmsdef} with $\delta = h$ and hence rectifies the amplification of fluctuations. This technique has already been exploited in \cite{BJ-MMS-11} for the HMM scheme of \eqref{eq:rode}. 

Other multi-scale schemes and methodologies have been developed for the $\mathcal{L}_\eps$-problem using properties of the medium such as separation of scales, periodicity, or ergodicity, e.g. \cite{Babu75,Babu77,HWC-MC-99}. For instance, the {\it Multiscale Finite Element Method} (MsFEM) in \cite{HWC-MC-99} constructs oscillatory bases by solving $\mathcal{L}_\eps$-problems on the supports of the nodal bases $\{\phi^{ij}\}$ and uses the so-called over-sampling strategy to diminish the resonance errors introduced by the artificial boundary conditions of the local $\mathcal{L}_\eps$-problems. It would be interesting to investigate how random fluctuation are captured by this scheme and in particular what is the effect of the over-sampling strategy. The differential operator in \eqref{eq:hpde} does not exhibit such resonances, and hence this paper does not address such issues.  

Other multi-scale schemes approach differential operators with rough coefficients like $\mathcal{L}_\eps$ without assuming any separation of scales or special properties of the coefficient $a_{\alpha \beta}$. For instance, \cite{Owhadi11} constructs oscillatory bases by solving $\mathcal{L}_\eps$-problems on sub-domains that are larger than the supports of $\{\phi^{ij}\}$ but still small compared to the whole domain $Y$. It was proved there, using the so-called transfer property of the divergence operator \cite{Owhadi10}, that the resulting finite dimensional space can be used to solve the whole $\mathcal{L}_\eps$-problem with errors that are independent of the regularity of $\{a_{\alpha\beta}\}$. Analyzing the fluctuations in such schemes is beyond the scope of this paper. 

%%%%%%%%%%%
%%%%%%%%%%%
\section{Review of Corrector Theory in the Continuous Scale}
\label{sec:continuous}

In this section, we review the corrector theories for \eqref{eq:rpde} developed in \cite{FOP-SIAP-82,B-CLH-08}. They are formulated for the following random fields.

\subsection{Random field settings}
\label{sec:randf}
In the elliptic equation \eqref{eq:rpde}, the heterogeneous potential, denoted by $\tilde{q}_\eps(x)$ henceforth, consists of a slowly varying part $q_0(x)$ and a highly oscillating part $q_\eps(x)$. The latter is modeled as $q(\frac{x}{\eps},\omega)$, that is, spatially rescaled from some random field $q(x,\omega)$ defined on the probability space $(\Omega, \F, \Pb)$. In the sequel, $\E$ denotes the mathematical expectation with respect to the probability measure $\Pb$.

We assume that $q(x,\omega)$ is {\itshape stationary}. That is to say, for any positive integer $k$ and $k$-tuple $(x_1,\cdots,x_k)$, for any point $z$ and any Borel measurable set $\mathcal{A} \subset \R^k$, one has
\begin{equation*}
  \Pb\{(q(x_1),\cdots,q(x_k)) \in \mathcal{A} \} = \Pb\{(q(x_1 + 
z),\cdots,q(x_k + z)) \in \mathcal{A} \}.
\end{equation*}
With this assumption, $q$ admits an (auto-)correlation function $R(x)$ defined by
\begin{equation}
  R(x) : = \E q(y)q(y+x) = \E q(0)q(x).
  \label{eq:Rdef}
\end{equation}
It is easy to check that $R$ is symmetric, that is $R(x) = R(-x)$ for all $x \in \R^d$. It holds also that $R$ is a function of positive type in the sense that the $N$-by-$N$ matrix formed by $\{R(x_i - x_j)\}_{i,j=1}^N$ for any positive integer $N$ and $N$-tuple $x_1,\cdots, x_N \in \R^d$ is a non-negative definite matrix. Due to Bochner's theorem \cite{RSII}, the Fourier transform of $R$ is a positive Radon measure. In particular, when $R$ is integrable, one can define 
\begin{equation}
  \sigma^2 : = \int_{\R^d} R(x) dx,
  \label{eq:sigdef}
\end{equation}
and it is a finite non-negative number. Without loss of generality, we also assume that $q$ is mean-zero.

A key parameter of the random field that will determine different limiting correctors is the de-correlation rate. It is an indicator of how fast (with respect to distance) the random field becomes independent.

Recall that a random field $q(x,\omega)$ is said to be $\rho$-mixing 
with mixing coefficient $\rho$ if there exists some function $\rho(r)$, 
which maps $\R_+$ to $\R_+$ and vanishes as $r$ tends to infinity, so 
that for any Borel sets $A, B \subset \R^d$, the sub-$\sigma$-algebras 
$\F_{A}$ and $\F_{B}$ generated by the process restricted on $A$ and $B$ 
respectively de-correlate rapidly as follows:
\begin{equation}
  \sup_{\xi \in L^2(\F_A), \eta \in L^2(\F_B)} \left\vert \frac{\E\ \xi 
  \eta - \E \xi \ \E \eta}{(\mathrm{Var}\ \xi \ \mathrm{Var}\ \eta)^{1/2}} 
  \right\vert \le \rho( d(A, B)).
  \label{eq:alphadef}
\end{equation}
Here $d(A,B)$ is the distance between the sets $A$ and $B$. The function 
$\rho$ characterizes the decay of the dependence of the random 
field at different places. We refer the reader to \cite{Doukhan} for more 
information on mixing properties of random fields.

We consider two settings of random fields. In the first case, we say that $q(x,\omega)$ is {\itshape short range correlated} (SRC). This means 
\begin{itemize}
\item[(S1)] $q$ is $\rho$-mixing with mixing coefficient $\rho(r)$ such that $\rho(|x|) \in L^1(\R^d)$.
\item[(S2)] $|q(x)| \le C$ so that $\tilde{q}_\eps(x)$ is positive for a.e. $\omega \in \Omega$.
\item[(S3)] In this case, the correlation function $R(x)$ is integrable over $\R^d$ and we assume that $\sigma$ defined in \eqref{eq:sigdef} does not vanish, that is to say $\sigma > 0$.
\end{itemize}

In the second case, we say that $q(x,\omega)$ is {\itshape long range correlated} (LRC). In fact, we consider the very specific setting as follows.
\begin{enumerate}
\item[(L1)] $q(x)$ has the form $\Phi\circ g(x)$, where $\Phi: \R \to \R$ is function on the real line and $g(x,\omega)$ is a centered stationary Gaussian random field with unit variance and heavy tail, i.e.
\begin{equation}
R_g (x) : = \E \{g(y)g(y+x)\} \sim \kappa_g |x|^{-\alpha} \quad \text{ as 
} |x| \to \infty,
\label{eq:Rgdef}
\end{equation}
for some positive constant $\kappa_g$ and some real number $\alpha < d$.
\item[(L2)] The function $\Phi$ is uniformly bounded so that $\tilde{q}_\eps(x)$ is positive for a.e. $\omega$. Further, we assume the Fourier transform $\hat{\Phi}$ satisfies that $\int_{\R} |\hat{\Phi}|(1+|\xi|^3)$ is finite. 
\item[(L3)] We assume also that $\Phi$ has Hermite rank one, that is
\begin{equation}
\int_{\R} \Phi(s) e^{-\frac{s^2}{2}} ds = 0, \quad
V_1 := \quad \int_{\R} 
s\Phi(s) e^{-\frac{s^2}{2}} ds \ne 0.
\label{eq:Hrone}
\end{equation}
As a consequence $\kappa : = V_1^2 \kappa_g$ defines a positive number. For more information on the Hermite rank, we refer the reader to \cite{Taqqu79}.
\end{enumerate}

\subsection{Corrector theory in the continuous scale}

The corrector theory for the elliptic equation with random potential, that is the limiting distribution of the difference between $u_\eps$ and $u_0$ which solve \eqref{eq:rpde} and \eqref{eq:hpde} respectively, has been investigated in \cite{FOP-SIAP-82,B-CLH-08} in the SRC setting, and in \cite{BGGJ-12} in the LRC setting. Using the notations and random field settings introduced above, the results in dimension two of these references can be summarized as follows. 

\begin{theorem}[\cite{FOP-SIAP-82,B-CLH-08,BGGJ-12}] \label{thm:continuous} Let $u_\eps$ and $u_0$ be as above and let the dimension $d=2$. Denote by $G(x,y)$ be the fundamental solution to the Dirichlet problem \eqref{eq:hpde}. When the random potential $q(x,\omega)$ satisfies the SRC setting, we have
\begin{equation}
\frac{u_\eps(x) - u_0(x)}{\sqrt{\eps^d}} \xrightarrow[\eps\to 0]{\mathrm{distribution}} \sigma \int_Y G(x,y) u_0(y) dW(y)
\label{eq:continuous1}
\end{equation}
weakly in the spatial variable. When the random potential satisfies the LRC setting, we have
\begin{equation}
\frac{u_\eps(x) - u_0(x)}{\sqrt{\eps^\alpha}} \xrightarrow[\eps\to 0]{\mathrm{distribution}} \sqrt{\kappa} \int_Y G(x,y) u_0(y) W^\alpha(dy)
\label{eq:continuous2}
\end{equation}
weakly in the spatial variable.
\end{theorem}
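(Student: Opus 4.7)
My plan is to treat the random potential perturbatively via a Duhamel-type identity. Subtracting \eqref{eq:hpde} from \eqref{eq:rpde} gives $(-\Delta + q_0)(u_\eps - u_0) = -q_\eps u_\eps$, and applying the homogenized solution operator $\G$ with kernel $G(x,y)$, followed by one substitution of $u_\eps = u_0 + (u_\eps - u_0)$, yields
$$
u_\eps - u_0 \;=\; -\G(q_\eps u_0) \;+\; \G\bigl(q_\eps\, \G(q_\eps u_\eps)\bigr).
$$
I expect the first term to drive the limit and the second to be a negligible remainder after rescaling. Testing against $\varphi \in \mathcal{C}^2(\overline{Y})$ and using self-adjointness of $\G$, the principal term becomes the oscillatory integral $\int_Y \psi(y) q(y/\eps)\, dy$ with $\psi := (\G\varphi)\, u_0 \in \mathcal{C}(\overline{Y})$, so the whole analysis reduces to a central limit theorem for this integral.

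In the SRC case I would multiply by $\eps^{-d/2}$ and compute $\Var$ directly via the change of variables $y - y' = \eps z$, obtaining $\sigma^2 \int_Y \psi^2$ in the limit; Gaussianity follows from the method of moments or from a standard CLT for $\rho$-mixing stationary fields, using the $L^1$ integrability (S1) of the mixing coefficient to control higher cumulants. In the LRC case I would expand $\Phi$ in the Hermite basis, so that $q = \sum_{k\ge 1} \tfrac{V_k}{k!} H_k \circ g$. The Hermite rank one assumption (L3) and the polynomial decay $R_g(x) \sim \kappa_g |x|^{-\alpha}$ from (L1) imply that only the $k=1$ term contributes at the scale $\eps^{-\alpha/2}$: the correlation of $H_k \circ g$ decays like $|x|^{-k\alpha}$, so after the rescaling only $k=1$ survives, and the covariance of the limit is read off from $\eps^{-\alpha}R_g(\cdot/\eps) \to \kappa_g |\cdot|^{-\alpha}$ in the distributional sense, giving the kernel $\kappa\, |x-y|^{-\alpha}$ with $\kappa = V_1^2 \kappa_g$.

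To dispose of the quadratic remainder I would use the uniform bound $\|u_\eps\|_\infty \le C$ from the maximum principle (available thanks to (S2) or (L2)) to reduce to a second-moment estimate on $\E |\langle \varphi, \G(q_\eps \G(q_\eps u_0))\rangle|^2$. This is a fourfold spatial integral against the two-point correlation of $q_\eps$ and a standard scaling argument shows it is $o(\eps^d)$ in the SRC setting and $o(\eps^\alpha)$ in the LRC setting; hence, after the rescaling factor $\eps^{-d/2}$ or $\eps^{-\alpha/2}$, the remainder vanishes in $L^2(\Omega)$.

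The main technical obstacle I anticipate is the logarithmic singularity of $G(x,y)$ on the diagonal in dimension $d=2$, which makes $\int_Y G(\cdot,y)u_0(y)\, dW(y)$ a random distribution rather than a random function; this is precisely why the theorem is formulated weakly in the spatial variable, and why one has to pair with $\varphi$ before taking the limit (since $\G\varphi$ is then continuous on $\overline{Y}$). Once tightness and finite-dimensional convergence are in hand, the distributional limits \eqref{eq:continuous1} and \eqref{eq:continuous2} follow. The full details are carried out in \cite{FOP-SIAP-82,B-CLH-08,BGGJ-12}.
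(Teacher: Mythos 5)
The paper does not prove Theorem~\ref{thm:continuous}: it is stated as a quoted result, attributed to \cite{FOP-SIAP-82,B-CLH-08,BGGJ-12}, and the surrounding text only summarizes those references. So there is no internal proof to compare against; the closest point of contact is that the paper's own proofs (of Theorem~\ref{thm:main}) reduce to exactly the oscillatory-integral limit theorems you invoke, namely Theorem 3.8 of \cite{B-CLH-08} for SRC and Lemma 4.3 of \cite{BGGJ-12} for LRC. Your outline is the standard route taken in those references: the Duhamel identity $u_\eps-u_0=-\G(q_\eps u_0)+\G(q_\eps\,\G(q_\eps u_\eps))$ is correct, the leading term paired with $\varphi$ is precisely the oscillatory integral $\int_Y (\G\varphi)\,u_0\, q_\eps\,dy$ (the sign is immaterial since $W$ and $-W$ agree in law), and the SRC/LRC dichotomy via mixing CLTs versus the Hermite-rank-one reduction is the right mechanism for identifying $\sigma^2$ and $\kappa=V_1^2\kappa_g$.

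One step is stated too casually. The second moment $\E\,|\langle\varphi,\G(q_\eps\G(q_\eps u_0))\rangle|^2$ is a fourfold integral against the \emph{four-point} correlation of $q_\eps$, not the two-point one; controlling it requires a bound on fourth-order moments or cumulants, which must be extracted from (S1)--(S2) (boundedness plus $\rho$-mixing) in the SRC case and from the Gaussian-subordinated structure (L1)--(L3) in the LRC case. Note also that the crude operator bound $\|\G(q_\eps\,\G(q_\eps u_\eps))\|_{L^2}\le C\|\G(q_\eps u_\eps)\|_{L^2}=O(\eps^{d/2})$ only matches the order of the leading term; the gain to $o(\eps^{d/2})$ comes from pairing with $\varphi$ first and exploiting the second convolution with $G$, which in $d=2$ produces an extra factor $\eps^{d/2}$ up to logarithms from the diagonal singularity $G(y,z)\sim\log|y-z|$. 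With that caveat your sketch is a faithful reconstruction of the cited proofs, and your closing remark about why the limit must be interpreted weakly in $x$ is exactly the reason for the formulation in \eqref{eq:continuous11}--\eqref{eq:continuous21}.
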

Here, $W$ and $W^\alpha$ are the same as in Theorem \ref{thm:main}. The convergences above are weakly in the spatial variable in the sense of \eqref{eq:continuous11} and \eqref{eq:continuous21}.
%%%%%%%%%%
%%%%%%%%%%

\section{Analysis of the Discrete Equation}
\label{sec:analysis}

In this section, we analyze the heterogeneous multi-scale scheme \eqref{eq:hmsdef} in detail. In particular, we prove that the scheme with $\eps \ll \delta \le h$ admits a unique solution in the space $V^h$ that approximates $u_0$ in $H^1$. With the standard uniform triangulation, we show that the stiffness matrix associated to the scheme has some conservative form, which allows us to write the discrete corrector conveniently in terms of their coordinates. In the next section, we use this discrete representation to prove the main theorem.

\subsection{Well-posedness of the scheme}

The multi-scale scheme \eqref{eq:hmsdef} with $\delta = h$ coincides with the standard FEM and is well-posed. For the sake of completeness, we show that this holds also for $\delta < h$.

Recall that $V^h$ is the finite dimensional subspace of $H^1_0(Y)$ with nodal basis $\{\phi^{ij}\}$ defined in section \ref{sec:prepde}. We have defined three quadratic forms: $\A_\eps$ for the heterogeneous equation \eqref{eq:rpde}, $\A^{h,\delta}_\eps$ for the heterogeneous multi-scale scheme which is an approximation of $\A_\eps$ by local integration, and $\A^{h,\delta}_0$ which is like $\A^{h,\delta}_\eps$ but uses the mean coefficient $q_0$ only and which is an approximation of the quadratic form associated to the homogenized equation \eqref{eq:hpde}, that is
\begin{equation}
\A_0(u,v) = \int_Y \nabla u\cdot \nabla v + q_0 uv \ dx, \quad u, v \in H^1_0(Y).
\label{eq:A0def}
\end{equation}
Let $K$ be an element in the triangulation $\mathcal{T}_h$, and let $x_K$ denote its barycenter. Then one may check that $\A^{h,\delta}_{\eps}(u^h,v^h)$ is a weighted sum of terms of the form 
\begin{equation*}
\hat{\A}^{h,\delta}_{\eps}(u^h,v^h)[x_K] = \aint_{K_\delta} \nabla u^h \cdot \nabla v^h + (q_0 + q_\eps) u^h v^h \ dx.
\end{equation*}
We define $\hat{\A}^{h,\delta}_0(u^h,v^h)[x_K]$ similarly. Hereafter, the integral symbol with a dash in the middle denotes the averaged integral.

The characterize the difference between the discrete bilinear forms associated to the random and homogenized equations, we define
\begin{equation}
e(\HMM) := \max_{\substack{\ \\K \in \mathcal{T}_h}} \ \sup_{P_1(K) \ni u^h, v^h \ne 0}\frac{|K| \lvert \hat{\A}^{h,\delta}_{\eps}(u^h,v^h)[x_K] - \hat{\A}^{h,\delta}_0(u^h,v^h)[x_K] \rvert}{\|u^h\|_{H^1(K)} \|v^h\|_{H^1(K)}}.
\label{eq:eHMM}
\end{equation}
With this notation we have the following theorem.

\begin{theorem}\label{thm:wellposed} Assume that $q_0$ is a nonnegative $\mathcal{C}^1(\overline{Y})$ and $q_\eps(x) + q_0$ is uniformly bounded and nonnegative; assume also that $f \in \mathcal{C}(\overline{Y})$. There exist unique solutions $u^{h,\delta}_\eps$ and $u^{h,\delta}_0$ in $V^h$ for the numerical schemes \eqref{eq:hmsdef} and \eqref{eq:uh0def}. Let $u_0$ solves \eqref{eq:hpde}. Let the parameters $h$ and $\delta$ in the numerical schemes be fixed with $1 \le h/\delta \le C$. Then we have
\begin{equation}
\|u^{h,\delta}_\eps - u_0\|_{H^1} \le C(h + e(\HMM)),\label{eq:H1est}\\
\end{equation}
The above estimates hold also if we replace $u^{h,\delta}_\eps$ by $u^{h,\delta}_0$ and delete the term $e(\HMM)$.
\end{theorem}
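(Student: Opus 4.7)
The plan is to establish existence and uniqueness via Lax--Milgram and then bound the $H^1$ error by decomposing
\begin{equation*}
\|u^{h,\delta}_\eps - u_0\|_{H^1} \le \|u^{h,\delta}_\eps - u^{h,\delta}_0\|_{H^1} + \|u^{h,\delta}_0 - u_0\|_{H^1},
\end{equation*}
treating the first term by a Strang-type identity and the second by a quadrature consistency argument (which, by itself, yields the estimate for $u^{h,\delta}_0$ without the $e(\HMM)$ piece). For well-posedness, the key observation is that $\nabla u^h$ is constant on each $K\in \mathcal{T}_h$ for $u^h \in V^h$, so $\frac{|K|}{|K_\delta|}\int_{K_\delta}\nabla u^h\cdot \nabla v^h\, dx = \int_K \nabla u^h\cdot \nabla v^h\, dx$ \emph{exactly}. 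Combined with $q_0 + q_\eps \ge 0$, this gives $\A^{h,\delta}_\eps(u^h, u^h) \ge \|\nabla u^h\|^2_{L^2(Y)} \ge c\|u^h\|^2_{H^1}$ via the Poincar\'e inequality on $H_0^1(Y)$. Continuity follows from the uniform $L^\infty$ bound on $q_0+q_\eps$, the hypothesis $h/\delta \le C$, and the $P_1$ inverse estimate $\|u^h\|_{L^2(K_\delta)} \le C(|K_\delta|/|K|)^{1/2}\|u^h\|_{L^2(K)}$, followed by Cauchy--Schwarz over elements. Lax--Milgram then produces the unique $u^{h,\delta}_\eps, u^{h,\delta}_0 \in V^h$, and testing $u^{h,\delta}_\eps$ against itself provides the $\eps,h,\delta$-independent a priori bound $\|u^{h,\delta}_\eps\|_{H^1} \le C\|f\|_{L^2}$.

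For the first term in the decomposition, subtracting \eqref{eq:uh0def} from \eqref{eq:hmsdef} yields the Galerkin-type identity
\begin{equation*}
\A^{h,\delta}_0(u^{h,\delta}_\eps - u^{h,\delta}_0, v^h) = (\A^{h,\delta}_0 - \A^{h,\delta}_\eps)(u^{h,\delta}_\eps, v^h) \quad \text{for all } v^h \in V^h.
\end{equation*}
By the definition of $e(\HMM)$ in \eqref{eq:eHMM} and Cauchy--Schwarz over elements, the right-hand side is bounded by $e(\HMM)\|u^{h,\delta}_\eps\|_{H^1}\|v^h\|_{H^1}$. Choosing $v^h = u^{h,\delta}_\eps - u^{h,\delta}_0$, invoking the coercivity of $\A^{h,\delta}_0$, and using the uniform a priori bound on $\|u^{h,\delta}_\eps\|_{H^1}$ produces $\|u^{h,\delta}_\eps - u^{h,\delta}_0\|_{H^1} \le C\, e(\HMM)$.

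For the second term I would apply Strang's first lemma. Since the gradient parts of $\A^{h,\delta}_0$ and $\A_0$ (defined in \eqref{eq:A0def}) coincide on $V^h$ by the constancy argument above, only the quadrature error on the zeroth-order term survives. On each $K$, writing $r = x - x_K$ and expanding $u^h v^h(x_K+r) = u^h(x_K)v^h(x_K) + (u^h(x_K)\,\nabla v^h + v^h(x_K)\,\nabla u^h)\cdot r + (\nabla u^h \cdot r)(\nabla v^h \cdot r)$, the crucial fact is that $\int_{K_\delta}(x-x_K)\,dx = \int_K(x-x_K)\,dx = 0$ because $x_K$ is the barycenter of both $K$ and its homothetic shrinking $K_\delta$. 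Hence the $O(1)$ constant contributions to $\A^{h,\delta}_0$ and $\A_0$ match identically and the first-moment contributions vanish on both sides; what remains is a quadratic-in-$r$ mismatch of size $O(h^2)|\nabla u^h|_K||\nabla v^h|_K|$ plus a Lipschitz remainder from Taylor-expanding $q_0 \in \mathcal{C}^1(\overline{Y})$ around $x_K$. Applying the $P_1$ inverse inequality $|\nabla u^h|_K| \le Ch^{-1}\|u^h\|_{H^1(K)}$, summing over $K$, and using Cauchy--Schwarz delivers the consistency bound
\begin{equation*}
|\A^{h,\delta}_0(u^h, v^h) - \A_0(u^h, v^h)| \le Ch\|u^h\|_{H^1}\|v^h\|_{H^1},
\end{equation*}
uniformly in $\delta \le h$. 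Combined with the standard interpolation bound $\|u_0 - I_h u_0\|_{H^1} \le Ch\|u_0\|_{H^2}$ (with $u_0 \in H^2(Y)$ by elliptic regularity on the convex polygon $Y$ since $f \in \mathcal{C}(\overline{Y})$ and $q_0 \in \mathcal{C}^1$), Strang's first lemma yields $\|u^{h,\delta}_0 - u_0\|_{H^1} \le Ch$. The main obstacle is precisely this uniform-in-$\delta$ quadrature consistency: a naive estimate would pick up a factor $|K|/|K_\delta| = (h/\delta)^2$ from the averaging on the smaller patch, and it is only the exact cancellation of the constant and first-moment contributions -- afforded by the barycenter alignment of $K_\delta$ with $K$ together with the $P_1$ structure of the basis -- that keeps the error uniformly of order $h$.
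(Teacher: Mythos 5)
Your proof is correct, and while it rests on the same engine as the paper's --- coercivity of $\A^{h,\delta}_p$ via the exact reproduction of the gradient term on $K_\delta$ (since $\nabla v^h$ is constant on $K$), plus Strang's first lemma --- it organizes the error differently and replaces the paper's key quadrature lemma by a more elementary computation. The paper applies Strang's lemma once, directly to $\A^{h,\delta}_\eps$ versus $\A_0$, and splits the consistency term into the $e(\HMM)$ piece and the quadrature piece; you instead use the triangle inequality $\|u^{h,\delta}_\eps-u_0\|\le\|u^{h,\delta}_\eps-u^{h,\delta}_0\|+\|u^{h,\delta}_0-u_0\|$, handling the first term by a Galerkin/perturbation identity (which cleanly isolates where the a priori bound $\|u^{h,\delta}_\eps\|_{H^1}\le C$ enters) and the second by Strang; the two routes give the same $C(h+e(\HMM))$. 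More substantively, for the quadrature consistency $\lvert\A^{h,\delta}_0-\A_0\rvert\le Ch\|u^h\|_{H^1}\|v^h\|_{H^1}$ the paper passes to the reference element and invokes the Bramble--Hilbert lemma (its Lemma \ref{lem:hK2K}), then tracks the scaling to see that the prefactor $|K|/|K_\delta|=(h/\delta)^2$ is absorbed; your explicit expansion of $u^hv^h$ about $x_K$, using that $x_K$ is the barycenter of \emph{both} $K$ and $K_\delta$ so the zeroth and first moments match exactly, makes the uniformity in $\delta$ transparent and identifies precisely which geometric feature of the patch is responsible --- at the (mild) cost of being tied to the $P_1$/homothetic-patch setting, whereas the Bramble--Hilbert route generalizes more readily to higher-order elements and less symmetric patches. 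Both arguments are complete; the one detail worth making explicit in yours is the inverse estimate $\frac{|K|}{|K_\delta|}\|u^h\|_{L^2(K_\delta)}\|v^h\|_{L^2(K_\delta)}\le C\|u^h\|_{L^2(K)}\|v^h\|_{L^2(K)}$ needed to control the $\mathcal{C}^1$ remainder of $q_0$ on the shrunk patch, which you do state in the well-posedness part.
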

\begin{proof} Let $p$ be either $\eps$ or $0$. The existence and uniqueness follow from\begin{equation*}
\A^{h,\delta}_p (v^h, v^h) \ge C\|v^h\|_{H^1}^2, \quad \text{ for any $v^h \in V^h$.}
\end{equation*}
Indeed, because $\nabla v^h$ is constant on $K \in \mathcal{T}_h$ and $q_0 + q_\eps$ is non-negative, we have
\begin{equation*}
\A^{h,\delta}_p (v^h, v^h) \ge \sum_{K \in \mathcal{T}_h} |K| \aint_{K_\delta} \lvert \nabla v^h\rvert^2 dx = \sum_{K \in \mathcal{T}_h} \int_K \lvert \nabla v^h\rvert^2 dx = |v^h|_{H^1}^2 \ge C\|v^h\|_{H^1}^2.
\end{equation*}
Here and in the sequel, $|\cdot|_{H^1}$ and $|\cdot|_{W^{k,p}}$ are the standard semi-norms of the corresponding Sobolev spaces.

We apply the first Strang lemma (Theorem 4.1.1 of \cite{Ciarlet78}), and obtain
\begin{equation*}
\|u_0 - u^{h,\delta}_\eps\|_{H^1} \le C \inf_{v^h \in V^h} \Big(\|u_0 - v^h\|_{H^1} + \sup_{w^h \in V^h} \frac{\lvert \A^{h,\delta}_{\eps}(v^h,w^h) - \A_0(v^h,w^h)\rvert}{\|w^h\|_{H^1}}\Big).
\end{equation*}
Set $v^h = \varPi u_0$, the projection of $u_0$ to the space $V^h$. From classical interpolation result, e.g. Theorem 3.1.6 of \cite{Ciarlet78}, we have
\begin{equation*}
\|\varPi u_0 - u_0\|_{H^1} \le Ch\|u_0\|_{H^2}.
\end{equation*}
For any $w^h \in V^h$, we have
\begin{equation*}
\lvert \A^{h,\delta}_{\eps}(v^h,w^h) - \A_0(v^h,w^h)\rvert \le \lvert \A^{h,\delta}_{\eps}(v^h,w^h) - \A^{h,\delta}_0(v^h,w^h)\rvert + \lvert \A^{h,\delta}_0(v^h,w^h) - \A_0(v^h,w^h)\rvert.
\end{equation*}
For the first term, we have
\begin{align*}
\lvert \A^{h,\delta}_{\eps}(v^h,w^h) - \A^{h,\delta}_0(v^h,w^h)\rvert &\le \sum_{K \in \mathcal{T}_h} |K|  \lvert \hat{\A}^{h,\delta}_{\eps}(v^h,w^h)[x_K] - \hat{\A}^{h,\delta}_0(v^h,w^h)[x_K]\rvert\\
&\le e(\HMM) \sum_{K \in \mathcal{T}_h} \|v^h\|_{H^1(K)} \|w^h\|_{H^1(K)}\\
&\le e(\HMM) \|v^h\|_{H^1} \|w^h\|_{H^1}.
\end{align*}
In the equalities above, we used the definition of $e(\HMM)$ and Cauchy-Schwarz respectively. For the second term, we first observe that
\begin{equation*}
\begin{aligned}
\A^{h,\delta}_0(v^h,w^h) - \A_0(v^h,w^h) = \sum_{K \in \mathcal{T}_h} & \left\{ \frac{|K|}{|K_\delta|} \left(\int_{K_\delta} q_0 v^h w^h\ dx - |K_\delta| (q_0 v^h w^h)(x_K) \right)\right.\\
 & \left. - \left(\int_{K} q_0 v^h w^h\ dx - |K| (q_0 v^h w^h)(x_K) \right)\right\}.
\end{aligned}
\end{equation*}
The items in the sum can be recognized as errors of barycenter numerical approximation of integrals. Error estimate for such numerical quadrature is discussed in the next lemma and by \eqref{eq:quaderr} we have that $\lvert \A^{h,\delta}_0(v^h,w^h) - \A_0(v^h,w^h) \rvert$ is bounded by
\begin{equation*}
\begin{aligned}
&\sum_{K \in \mathcal{T}_h} C\|q_0\|_{C^1} \left\{ \frac{h^2}{\delta^2} \delta \|v^h\|_{H^1(K_\delta)} \|w^h\|_{L^2(K_\delta)} + h \|v^h\|_{H^1(K)} \|w^h\|_{L^2(K)} \right\}\\
\le & Ch\|q_0\|_{C^1} \sum_{K \in \mathcal{T}_h} \|v^h\|_{H^1(K)} \|w^h\|_{L^2(K)} \le Ch\|q_0\|_{C^1} \|v^h\|_{H^1} \|w^h\|_{H^1}.
\end{aligned}
\end{equation*}
Combining the above estimates, we find that
\begin{equation*}
\|u_0 - u^{h,\delta}_\eps\|_{H^1}  \le \|\varPi u_0 - u_0\|_{H^1} + (e(\HMM) + Ch)\|\varPi u_0\|_{H^1} \le C(h + e(\HMM)).
\end{equation*}
The constant depends on $\|q_0\|_{C^1}$, $\|u_0\|_{H^2}$ and some uniform bound of $h/\delta$ and hence is independent of $\eps, h$ or $\delta$.
\end{proof}

The following lemma concerns error estimate for barycenter numerical quadrature of product of two functions in $P_1(K)$, the space of linear polynomials on a triangular element $K$. It is stated in the simplest setting thought it can be generalized to regular element easily. This lemma is used in the proof of the previous theorem.

\begin{lemma}\label{lem:hK2K} Let $\hat{K}$ be an isosceles right triangle with unit side length. Let $K$ be the image of $\hat{K}$ under some linear transform $F(\hat{x}) = B\hat{x} + b \in \R^2$. Assume $q_0 \in W^{1,\infty}(K)$. Then for any $v, w \in P_1(K)$, we have
\begin{equation}
\left\lvert \int_{K} q_0(x) v(x) w(x) \ dx - |K|(q_0vw)(x_K) \right\rvert \le C\|B\| \|q_0\|_{W^{1,\infty}(K)} \|v\|_{H^1(K)} \|w\|_{L^2(K)}.
\label{eq:quaderr}
\end{equation}
Here, $x_K$ is the barycenter of $K$; $\|B\|$ is the matrix norm of $B$.
\end{lemma}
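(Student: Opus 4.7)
The strategy is to pull back to the reference triangle $\hat{K}$ and split the quadrature error into two parts: one reflecting the Lipschitz variation of $q_0$ around $x_K$, and one representing the pure quadrature error of the barycenter rule on $P_1$ polynomials. Setting $\hat{v}=v\circ F$, $\hat{w}=w\circ F$, $\hat{q}_0=q_0\circ F$, and noting that $F$ maps $\hat{x}_{\hat{K}}$ to $x_K$, the change-of-variables formula gives
$$\int_K q_0 vw\,dx - |K|(q_0vw)(x_K) \;=\; |\det B|\,\hat{E}, \qquad \hat{E} := \int_{\hat{K}} \hat{q}_0 \hat{v}\hat{w}\,d\hat{x} - |\hat{K}|(\hat{q}_0\hat{v}\hat{w})(\hat{x}_{\hat{K}}).$$
Writing $\hat{q}_0 = \hat{q}_0(\hat{x}_{\hat{K}}) + (\hat{q}_0-\hat{q}_0(\hat{x}_{\hat{K}}))$ I decompose $\hat{E} = \hat{T}_1 + \hat{q}_0(\hat{x}_{\hat{K}})\hat{E}_0$, where $\hat{T}_1 = \int_{\hat{K}}(\hat{q}_0-\hat{q}_0(\hat{x}_{\hat{K}}))\hat{v}\hat{w}\,d\hat{x}$ and $\hat{E}_0 = \int_{\hat{K}}\hat{v}\hat{w}\,d\hat{x} - |\hat{K}|(\hat{v}\hat{w})(\hat{x}_{\hat{K}})$.

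For $\hat{T}_1$, since $\nabla_{\hat{x}}\hat{q}_0 = B^\top(\nabla q_0)\circ F$, one has $\|\hat{q}_0-\hat{q}_0(\hat{x}_{\hat{K}})\|_{L^\infty(\hat{K})} \le C\|\nabla\hat{q}_0\|_{L^\infty(\hat{K})} \le C\|B\|\,\|q_0\|_{W^{1,\infty}(K)}$, and Cauchy--Schwarz yields $|\hat{T}_1| \le C\|B\|\,\|q_0\|_{W^{1,\infty}(K)}\|\hat{v}\|_{L^2(\hat{K})}\|\hat{w}\|_{L^2(\hat{K})}$. For $\hat{E}_0$, the key observation is that the barycenter quadrature on $\hat{K}$ is exact on the whole space $P_1(\hat{K})$, not merely on constants; hence $\hat{E}_0(c,\hat{w})=0$ for every constant $c$, and by bilinearity
$$\hat{E}_0 \;=\; \hat{E}_0\bigl(\hat{v}-\hat{v}(\hat{x}_{\hat{K}}),\hat{w}\bigr) \;=\; \int_{\hat{K}}\bigl(\hat{v}-\hat{v}(\hat{x}_{\hat{K}})\bigr)\hat{w}\,d\hat{x},$$
the point-evaluation term vanishing at $\hat{x}_{\hat{K}}$. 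Since $\nabla\hat{v}$ is constant on $\hat{K}$, one has $\|\hat{v}-\hat{v}(\hat{x}_{\hat{K}})\|_{L^\infty(\hat{K})} \le C\|\nabla\hat{v}\|_{L^2(\hat{K})}$, giving $|\hat{E}_0| \le C\|\nabla\hat{v}\|_{L^2(\hat{K})}\|\hat{w}\|_{L^2(\hat{K})}$.

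Scaling back via the standard identities $\|\hat{v}\|_{L^2(\hat{K})} = |\det B|^{-1/2}\|v\|_{L^2(K)}$, $\|\hat{w}\|_{L^2(\hat{K})} = |\det B|^{-1/2}\|w\|_{L^2(K)}$, and $\|\nabla\hat{v}\|_{L^2(\hat{K})} \le \|B\|\,|\det B|^{-1/2}\|\nabla v\|_{L^2(K)}$, the factor $|\det B|$ from the change of variables absorbs the two $|\det B|^{-1/2}$ factors, leaving a single $\|B\|$ (produced by $\nabla\hat{q}_0$ in the estimate for $\hat{T}_1$ or by $\nabla\hat{v}$ in the estimate for $\hat{E}_0$). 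Combining both contributions and dominating $\|\nabla v\|_{L^2(K)} \le \|v\|_{H^1(K)}$ delivers the claim.

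I expect the main obstacle to be the handling of $\hat{E}_0$. A naive argument using only exactness of the quadrature on constants would yield $|\hat{E}_0|\le C\|\hat{v}\|_{L^2(\hat{K})}\|\hat{w}\|_{L^2(\hat{K})}$, which on rescaling produces only $C\|v\|_{L^2(K)}\|w\|_{L^2(K)}$ and misses the $\|B\|$ factor entirely. Exploiting $P_1$-exactness to subtract $\hat{v}(\hat{x}_{\hat{K}})$ before estimating is precisely what extracts a gradient of $\hat{v}$ and hence the decisive $\|B\|$ upon rescaling.
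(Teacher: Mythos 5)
Your proof is correct, and it reaches the estimate by a more hands-on route than the paper. Both arguments pass to the reference element via $F$ and pick up the decisive factor $\|B\|$ from a derivative upon rescaling, but the mechanisms differ: the paper treats $\hat{\psi}=\hat{q}_0\hat{v}$ as a single $W^{1,\infty}(\hat{K})$ argument of the error functional $\hat{E}(\cdot\,\hat{w})$, checks that this functional vanishes on $P_0(\hat{K})$, and invokes the Bramble--Hilbert lemma to replace the full norm $\|\hat{\psi}\|_{W^{1,\infty}}$ by the seminorm $|\hat{\psi}|_{W^{1,\infty}}$, after which a product rule and norm equivalence on $P_1(\hat{K})$ distribute the derivative onto $\hat{q}_0$ or $\hat{v}$. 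You instead decompose the error explicitly into the Lipschitz-variation term $\hat{T}_1$ and the pure quadrature term $\hat{q}_0(\hat{x}_{\hat{K}})\hat{E}_0$, and for the latter exploit the $P_1$-exactness of the barycenter rule directly to subtract $\hat{v}(\hat{x}_{\hat{K}})$ and expose $\nabla\hat{v}$. This is an elementary, self-contained substitute for Bramble--Hilbert (your two subtractions at the barycenter play exactly the role of the functional's vanishing on constants), at the modest cost of having to track two terms rather than one; the paper's version generalizes more readily to higher-order quadratures and elements, while yours makes transparent exactly where the single power of $\|B\|$ comes from --- a point your closing remark correctly identifies as the crux.
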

\begin{proof} We follow the steps in the proof of \cite[Theorem 4.1.4 ]{Ciarlet78}. Consider any $\psi \in W^{1,\infty}(K)$ so that $\hat{\psi} = \psi \circ F$ is in $W^{1,\infty}(\hat{K})$. Let $|E(\psi w)|$ denote the error of the barycenter quadrature for the integral $\int_K \psi w dx$. After change of variables,
\begin{equation*}
E(\psi w) = |\mathrm{det} (B)| \left(\int_{\hat{K}} \hat{\psi}(\hat{x}) \hat{w}(\hat{x}) \ d\hat{x} - |\hat{K}|(\hat{\psi}\hat{w})(\hat{x}_{\hat{K}})\right) = |\mathrm{det} (B)| \hat{E}(\hat{\psi}\hat{w}).
\end{equation*}
On the reference element $\hat{K}$, since all norms on $P_1(\hat{K})$ are equivalent, we have
\begin{equation*}
|\hat{E}(\hat{\psi}\hat{w})| \le \hat{C} \|\hat{\psi}\|_{L^\infty(\hat{K})} \|\hat{w}\|_{L^{\infty}(\hat{K})} \le \hat{C} \|\hat{\psi}\|_{W^{1,\infty}(\hat{K})} \|\hat{w}\|_{L^2(\hat{K})}.
\end{equation*}
We view $\hat{E}(\cdot\ \hat{w}): \hat{\psi} \mapsto \hat{E}(\hat{\psi} \hat{w})$ as a linear functional on $W^{1,\infty}(\hat{K})$. The above estimate shows that $\hat{E}(\cdot \ \hat{w})$ is continuous with norm less than $\hat{C}\|\hat{w}\|_{L^2(\hat{K})}$. We check also that $\hat{E}(\cdot \ \hat{w})$ vanishes on $P_0(\hat{K})$, the space of constant functions on $\hat{K}$. Therefore, due to Bramble-Hilbert lemma \cite[Theorem 4.1.3]{Ciarlet78}, there exists some $\hat{C}$ such that for all $\ \hat{\psi} \in {W^{1,\infty}(\hat{K})}$,
\begin{equation*}
|\hat{E}(\hat{\psi} \hat{w})| \le \hat{C} \|\hat{E}(\cdot \ \hat{w})\|_{\mathcal{L}(W^{1,\infty} (\hat{K}))} \lvert\hat{\psi}\rvert_{W^{1,\infty}(\hat{K})} \le  \hat{C}\|\hat{w}\|_{L^2(\hat{K})}  |\hat{\psi}|_{W^{1,\infty}(\hat{K})}.
\end{equation*}
Take $\hat{\psi} = \hat{q_0} \hat{v}$. We check that
\begin{equation*}
\begin{aligned}
|\hat{\psi}|_{W^{1,\infty}(\hat{K})} &\le |\hat{q}_0|_{W^{1,\infty}(\hat{K})}\|\hat{v}\|_{L^{\infty}(\hat{K})} + \|\hat{q}_0\|_{L^\infty(\hat{K})} |\hat{v}|_{W^{1,\infty}(\hat{K})} \\
&\le \hat{C} \left( |\hat{q}_0|_{W^{1,\infty}(\hat{K})} \|\hat{v}\|_{L^2(\hat{K})} + \|\hat{q}_0\|_{L^\infty(\hat{K})} |\hat{v}|_{H^1(\hat{K})}\right).
\end{aligned}
\end{equation*}
The last inequality holds because $\hat{v} \in P_1(\hat{K})$ and all norms on $P_1(\hat{K})$ are equivalent. Finally, recall the relations \cite[Theorem 3.1.2]{Ciarlet78} that for any integer $m\ge 0$, any $q \in [1,\infty]$, and for any $\phi \in W^{m,p}(K)$,
\begin{equation}
|\hat{\phi}|_{W^{m,q} (\hat{K})} \le C\|B\|^m |\mathrm{det} (B)|^{-\frac 1 q} |\phi|_{W^{m,q}(K)}.
\label{eq:Wtrans}
\end{equation}
Apply this inequality to control the terms $|\hat{q}_0|_{W^{1,\infty}(\hat{K})}$ and $|\hat{v}|_{H^1(\hat{K})}$. On the other hand, for any $\phi \in L^p(K)$, we have
\begin{equation}
\|\hat{v}\|_{L^p(\hat{K})} = |\det(B)|^{-\frac 1 p}\|v\|_{L^p(K)}.
\label{eq:L2trans}
\end{equation}
Use this equality to estimate the $L^2$ norms of $\hat{v}$ and $\hat{w}$. Finally, combining the above estimates, we obtain the desired inequality.
\end{proof}

For the heterogeneous multi-scale error, we have the following result. We do not intend to make these estimates sharp. Nevertheless, the following theorem shows that the error in \eqref{eq:H1est} is small if the correlation length $\eps$ is much smaller than the parameters $h$ and $\delta$ of the HMM scheme, say when $\eps/(\delta h^2) \ll 1$ in the SRC setting and when $\eps/(\delta h^{2d/\alpha}) \ll 1$ in the LRC setting.

\begin{theorem}\label{thm:ehmm} Let dimension $d=2$. Let $e(\HMM)$ be the multi-scale heterogeneous error defined in \eqref{eq:eHMM}, and $h,\delta>0$ with $\delta\le h$ be the fixed parameters of the scheme \eqref{eq:hmsdef}. Then for $\eps$ sufficiently small, we have the following estimate:
\begin{equation}
\E \ e(\HMM) \le \left\{
\begin{aligned}
&C\frac{1}{h^d}\left(\frac{\eps}{\delta}\right)^{\frac d 2},  \quad & \text{ in the SRC setting,}\\
&C\frac{1}{h^d}\left(\frac{\eps}{\delta}\right)^{\frac \alpha 2}, \quad & \text{ in the LRC setting.}
\end{aligned}
\right.
\end{equation}
The constants $C$ above does not depend on $h,\delta$ or $\eps$.
\end{theorem}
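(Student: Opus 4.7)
The plan is to recognize that the bilinear-form difference $\hat{\A}^{h,\delta}_\eps(u^h,v^h)[x_K] - \hat{\A}^{h,\delta}_0(u^h,v^h)[x_K]$ reduces exactly to $\aint_{K_\delta} q_\eps u^h v^h\,dx$, since the gradient and $q_0$ contributions cancel in both forms. Because $q$ is stationary and mean-zero, this is a centered random integral, and the task is to propagate $L^2$-estimates of such integrals through the sup over $P_1(K)$ and the max over $K\in\mathcal{T}_h$ that define $e(\HMM)$.

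First, I would reduce the sup over $P_1(K)$ to a finite sum. Writing $u^h = \sum_{i=1}^3 a_i \phi^K_i$ and $v^h=\sum_{j=1}^3 b_j \phi^K_j$ in the vertex nodal basis (with $0\le \phi^K_i\le 1$), the integral becomes $\sum_{i,j} a_i b_j Y^K_{ij}$ with $Y^K_{ij}:=\int_{K_\delta} q_\eps \phi^K_i \phi^K_j\,dx$. An inverse estimate (scaling from the reference element via \eqref{eq:Wtrans}--\eqref{eq:L2trans}, as in the proof of Lemma~\ref{lem:hK2K}) yields $|a_i|=|u^h(x_i)|\le \|u^h\|_{L^\infty(K)}\le Ch^{-d/2}\|u^h\|_{H^1(K)}$, and similarly for $|b_j|$. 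Combining with $|K|/|K_\delta|=(h/\delta)^d$, this gives the pointwise-in-$\omega$ bound
\[
X_K := \sup_{u^h,v^h\in P_1(K)\setminus\{0\}} \frac{|K|\,\bigl|\hat{\A}^{h,\delta}_\eps(u^h,v^h)[x_K]-\hat{\A}^{h,\delta}_0(u^h,v^h)[x_K]\bigr|}{\|u^h\|_{H^1(K)}\|v^h\|_{H^1(K)}} \;\le\; C\,\delta^{-d}\sum_{i,j=1}^3 |Y^K_{ij}|.
\]

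Next I would estimate $\E|Y^K_{ij}|$ through its variance. Since $\E Y^K_{ij}=0$, we have $\E|Y^K_{ij}|\le(\Var Y^K_{ij})^{1/2}$, and stationarity together with $|\phi^K_i\phi^K_j|\le 1$ gives
\[
\Var Y^K_{ij} \;\le\; \int_{K_\delta}\int_{K_\delta} \bigl|R\bigl(\tfrac{x-y}{\eps}\bigr)\bigr|\,dx\,dy.
\]
In the SRC setting, $R\in L^1(\R^d)$ by (S1)(S3), so a change of variables produces $\Var Y^K_{ij}\le C\eps^d|K_\delta|=C\eps^d\delta^d$, hence $\E|Y^K_{ij}|\le C\eps^{d/2}\delta^{d/2}$. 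In the LRC setting, the Hermite rank one assumption (L3) together with \eqref{eq:Rgdef} and the $L^\infty$-bound (L2) yields the pointwise decay $|R(z)|\le C(1+|z|)^{-\alpha}$; splitting the double integral into $\{|x-y|\le \eps\}$ and its complement and using $\alpha<d$ to integrate $|x-y|^{-\alpha}$ over $K_\delta$ produces $\Var Y^K_{ij}\le C\eps^\alpha\delta^{2d-\alpha}$, hence $\E|Y^K_{ij}|\le C\eps^{\alpha/2}\delta^{d-\alpha/2}$.

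Assembling: in the SRC setting, $\E X_K\le C\delta^{-d}\cdot \eps^{d/2}\delta^{d/2}=C(\eps/\delta)^{d/2}$, and in the LRC setting $\E X_K\le C(\eps/\delta)^{\alpha/2}$. Since these bounds are uniform in $K$, the crude union bound $\E\max_K X_K\le\sum_K \E X_K$ together with $|\mathcal{T}_h|\sim h^{-d}$ produces the claimed estimates. The main obstacle is the LRC variance estimate: one has to derive a clean pointwise decay of $R_q$ from the Hermite rank one hypothesis and handle the near-diagonal/far-diagonal split to extract the sharp power $\delta^{2d-\alpha}$, using $\alpha<d$ essentially to keep $\int_{|z|\le \delta}|z|^{-\alpha}dz$ finite. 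The remaining ingredients are routine inverse estimates, Fubini with stationarity, and the crude union bound over the triangulation.
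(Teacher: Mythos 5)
Your proposal is correct and follows essentially the same route as the paper: reduce the sup over $P_1(K)$ to the nodal integrals $\int_{K_\delta}q_\eps\phi_i\phi_j$ via scaling/inverse estimates on the reference element, bound their second moments to get $\eps^{d/2}\delta^{d/2}$ (SRC) and $\eps^{\alpha/2}\delta^{d-\alpha/2}$ (LRC), and finish with the crude union bound over $\mathcal{T}_h$. The only cosmetic difference is that the paper invokes the asymptotic variance results of \cite{B-CLH-08,BGGJ-12} (SRC) and the Hardy--Littlewood--Sobolev inequality (LRC) where you compute the variances directly from the correlation function via a near/far-diagonal split; both give the same powers.
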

\begin{proof} In the definition \eqref{eq:eHMM}, if we replace the $H^1$ norm on the denominator by $L^2$ norm and define for each $K \in \mathcal{T}_h$
\begin{equation*}
e_K : = \sup_{v, w \in P_1(K)} e_K(v,w) \quad \text{and} \quad e_K(v,w) : = \frac{|K| \lvert \hat{\A}^{h,\delta}_{\eps}(u,v)[x_K] - \hat{\A}^{h,\delta}_0(u,v)[x_K] \rvert}{\|v\|_{L^2(K)} \|w\|_{L^2(K)}},
\end{equation*}
then we check that $e(\HMM) \le \sup_{K \in \mathcal{T}_h} e_K$. Therefore, it suffices to estimate $e_K$.

For any $K \in \mathcal{T}_h$, let $\{\phi_m, m= 1, 2, 3\}$ be the standard basis functions of $P_1(K)$. As described above \eqref{eq:hmsdef}, each of these basis functions is a linear polynomial on $K$ that has value $1$ at one vertex of $K$ and vanishes at the other two vertices. Any function $v \in P_1(K)$ is identified with its coordinate $V \in \R^3$, that is by $v = \sum_{m=1}^3 V_m \phi_m$. We claim that there exist constants $0<\hat{c}<\hat{C}$, which are independent of $h,\delta$ and $\eps$, such that
\begin{equation}
\hat{c} h^{\frac d 2} \|V\| \le \|v\|_{L^2(K)} \le \hat{C} h^{\frac d 2} \|V\|,
\label{eq:P1norm}
\end{equation}
where $\|V\|$ is the Euclidean norm of $V$. To see this, recall the linear transform $F: \hat{K} \to K$ in the proof of Lemma \ref{lem:hK2K}. As before, a function $v \in P_1(K)$ is related to $\hat{v} = v\circ F \in P_1(\hat{K})$. In particular, $\hat{v}$ and $v$ have the same coefficients with respect to the basis $\{\hat{\phi}_m\}$ and $\{\phi_m\}$ respectively. In the finite dimensional space $P_1(\hat{K})$, since all norms are equivalent, we have $\hat{c} \|V\| \le \|\hat{v}\|_{L^2(\hat{K})} \le \hat{C} \|V\|$ for some $\hat{c}, \hat{C}$. Thanks to \eqref{eq:L2trans}, we obtain \eqref{eq:P1norm}.

For arbitrarily fixed $K \in \mathcal{T}_h$, $v, w \in P_1(K)$ and $v, w \not\equiv 0$, identified with their coefficients $V, W$, we explicitly calculate the expression of $e_K(v,w)$ and get
\begin{equation*}
e_K(v,w) = \frac{|K|}{|K_\delta| \|v\|_{L^2(K)} \|w\|_{L^2(K)}} \left| \sum_{m,n=1}^3 V_j W_m \int_{K_\delta} q_\eps(x) \phi_m(x) \phi_n(x) dx \right|.
\end{equation*}
Let us define, with $\chi_A$ denoting the indicator function of a set $A \subset \R^2$,
\begin{equation*}
X^\eps_{m,n} = \int q_\eps(x) \psi_{m,n}(x) dx, \quad \text{where} \quad \psi_{m,n}(x) = \chi_{K_\delta} (x) \phi_m (x) \phi_n (x).
\end{equation*}
Thanks to the Cauchy-Schwarz inequality and \eqref{eq:P1norm}, we have
\begin{equation}
e_K(v,w) \le \left(\frac{h}{\delta}\right)^d \frac{1}{\hat{c}^2 h^d} \left(\sum_{m,n=1}^3 |X^\eps_{m,n}|^2 \right)^{\frac 1 2},
\label{eq:eKbdd}
\end{equation}
where the ratio $|K|/|K_\delta|$ is replaced by $h^d\delta^{-d}$. Since this inequality is uniform in $v, w$, it is also satisfied by $e_K$.

To simplify the presentation, let $X^\eps$ and $\psi$ be the short-hand notation for $X^\eps_{m,n}$ and $\psi_{m,n}$ momentarily. Let us estimate $\E |X^\eps|^2$. We observe that $X^\eps$ is an integral of the highly oscillating random field $q_\eps$ against some slowly varying function $\psi$. Such integrals are studied carefully in \cite{B-CLH-08,BGGJ-12}. In the SRC setting, $\eps^{-\frac d 2} X^\eps$ converges in distribution to a mean-zero Gaussian variable with variance $\sigma^2 \|\psi\|_{L^2}^2$; see \cite[Theorem 3.8]{B-CLH-08}. In fact, its variance converges. Therefore, for sufficiently small $\eps$, we have
\begin{equation}
\E |X^\eps|^2 \le C \eps^d\|R\|_{L^1} \|\psi\|_{L^2}^2 = C \eps^d\|R\|_{L^1} \|\phi_m \phi_n \|_{L^2(K_\delta)}^2 \le C\|R\|_{L^1} \eps^d \delta^d.
\label{eq:estHMMS}
\end{equation}
Here $R$ is the correlation function of $q$ defined in \eqref{eq:Rdef}. We argued that $\|\phi_m \phi_n\|_{L^2(K_\delta)}^2 \le |K_\delta| \le C\delta^d$ because $|\phi_m \phi_n| \le 1$.

In the LRC setting, $\eps^{-\frac \alpha 2} X^\eps$ converges in distribution to a mean-zero Gaussian variable with variance $\|\psi\otimes \psi\|_{L^1(Y^2, \kappa|x-y|^{-\alpha} dxdy)}$; see \cite[Lemma 4.3]{BGGJ-12}. In fact, its variance converges. Consequently, for sufficiently small $\eps$, we have
\begin{equation}
\begin{aligned}
\E |X_\eps|^2 &\le C \eps^\alpha \iint_{K_\delta \times K_\delta} \frac{\kappa \psi(x)\psi(y)}{|x-y|^\alpha} dx dy \le C\eps^\alpha \|\psi\|_{L^{\frac{2d}{2d-\alpha}}}^2 \\
&=C\eps^\alpha \|\phi_m \phi_n \|_{L^{\frac{2d}{2d-\alpha}}}^2 \le C \eps^\alpha \delta^{2d-\alpha}.
\end{aligned}
\label{eq:estHMML}
\end{equation}
In the second inequality we used Hardy-Littlewood-Sobolev inequality \cite[Theorem 4.3]{LL-A}, and we calculated that $\|\phi_m \phi_n\|_{L^{\frac{2d}{2d-\alpha}}} \le |K_\delta|^{\frac{2d-\alpha}{2d}}$.

We observe that the above estimates of $\E|X^\eps|^2$ is uniform in $m,n$, and that the sum in \eqref{eq:eKbdd} has a finite number of terms independent of $h, \delta, \eps$. As a result, the inequalities \eqref{eq:estHMMS} and \eqref{eq:estHMML} show that $\E\ e_K$ is of order $({\frac \eps \delta})^{d/2}$ and $({\frac \eps \delta})^{\alpha/2}$ in the SRC and LRC settings respectively. Finally, we replace the maximum in \eqref{eq:eHMM} by the sum and get
\begin{equation}
\E \ e(\HMM) \le \sum_{K \in \mathcal{T}_h} \E \ e_K \le \frac{2}{h^d} \sup_{K \in \mathcal{T}_h} \E \ e_K.
\end{equation}
Here, $\frac{2}{h^d}$ is the number of elements in $\mathcal{T}_h$. Since the estimates \eqref{eq:estHMMS} and \eqref{eq:estHMML}  are uniform over $K \in \mathcal{T}_h$, we obtain the desired estimates.
\end{proof}

%%%%%%%%%
\subsection{Coordinate representation and conservative form}

The next step is to reformulate the multi-scale schemes \eqref{eq:hmsdef} and \eqref{eq:uh0def} as linear systems for the coordinates of the solutions in $V^h$, to investigate the structure of the associated stiffness matrices, and to write the discrete corrector $u^{h,\delta}_\eps - u^{h,\delta}_0$ in terms of their coordinates.

We start by introducing some useful notation. In the triangulation illustrated by Fig.~\ref{fig:tri}, we identify each grid point $(ih,jh)$ with a unique two dimensional index $(i,j)$. The set of inner grid points are denoted by $\Iinner=\{(i,j)~|~ 1\le i, j \le N-1\}$, and the set of all grid points including the boundary ones is denoted by $\Icomp = \{(i,j)~|~ 0\le i,j \le N\}$. We define six difference operators $\dind^{\pm}_s: \Iinner \to \Icomp$ as follows:
\begin{equation}
\dind^{\pm}_1 (i,j) = (i\pm 1, j), \quad \dind^{\pm}_2 (i,j) = (i, j\pm 1), \quad \dind^{\pm}_3 (i,j) = (i\pm 1, j\pm 1).
\end{equation}
Here, $s=1,2,3$ denotes three directions: horizontal, vertical and diagonal; the plus or minus sign indicates forward or backward differences.

In the sequel, we often write $(i,j)$ simply as $ij$. For each $ij \in \Iinner$, there corresponds a basis function $\phi^{ij}$ which is piecewise linear on each element $K \in \mathcal{T}_h$, has value one at $ij$ and has value zero at other nodal points. Any function $v^h$ in the space $V^h$ can be uniquely written as
%\begin{equation*}
$v^h(x) = \sum_{ij \in \Iinner} V_{ij} \phi^{ij}(x)$,
%\end{equation*} 
and the vector $(V_{ij}) \in \R^{(N-1)\times (N-1)}$ is called the coordinates of $v^h$. We identify $\R^{(N-1)\times (N-1)}$, the space for the coordinates, with $V^h$ itself. Now, the difference operators $\dind^{\pm}_s$ induce difference operators $D^{\pm}_s$ on $V^h$ as follows:
\begin{equation}
\begin{aligned}
D^+_s V_{ij} = V_{\dind^+_s ij} - V_{ij}, \quad D^-_s V_{ij} = V_{ij} - V_{\dind^-_s ij}.
\end{aligned}
\label{eq:Dpmdef}
\end{equation}
Note when $\dind^\pm_s ij$ lands outside of $\Iinner$, {\it i.e.}~on the boundary, the value $V_{\dind^\pm_s ij}$ is set to zero.

Using the coordinate representation of functions $u^{h,\delta}_\eps = \sum_{ij} U^\eps_{ij} \phi^{ij}$ and $u^{h,\delta}_0 = \sum_{ij} U^0_{ij} \phi^{ij}$, we can recast the heterogeneous multi-scale schemes \eqref{eq:hmsdef} and \eqref{eq:uh0def} as the following systems: for all $ij \in \Iinner$,
\begin{align}
\sum_{kl} A^\eps_{ijkl} U^\eps_{kl} &= \langle f, \phi^{ij} \rangle, \label{eq:Uepsdis}\\
\sum_{kl} A^0_{ijkl} U^0_{kl} &= \langle f, \phi^{ij} \rangle. \label{eq:U0dis}
\end{align}
Here, the stiffness matrices are defined by
\begin{equation*}
A^\eps_{ijkl} = \A^{h,\delta}_\eps(\phi^{ij},\phi^{kl}), \quad A^0_{ijkl} = \A^{h,\delta}_0(\phi^{ij},\phi^{kl}).
\end{equation*}
These stiffness matrices have the following structures.
\begin{proposition}\label{prop:stiff}
Let $A^p = (A^p_{ijkl})$ with $p = 0 \text{ or } \eps$ be the stiffness matrices above. We observe
\begin{enumerate}
\item[{\upshape (P1)}] $A^p_{ijkl} = A^p_{klij}$;

\item[{\upshape (P2)}] $A^p_{ijkl} = 0$ unless $kl \in \Iinner_{ij} := \{ij\} \bigcup \{\dind^{\pm}_s ij ~|~ s = 1, 2, 3\}$.

\item[{\upshape (P3)}] For any $ij \in \Iinner$, we have
\begin{equation}
A^p_{ijij} = d^p_{ij} - \sum_{s=1}^3 \Big( A^p_{ij \dind^+_s ij} + A^p_{ij \dind^-_s ij} \Big),
\label{eq:Apstructure}
\end{equation}
for some $d^p_{ij}$ that can be explicitly computed as in \eqref{eq:dpij} below.
\end{enumerate}
\end{proposition}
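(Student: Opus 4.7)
The plan is to verify (P1), (P2), (P3) in turn, using three standard structural features of the scheme: the symmetry of the bilinear form, the local support of the nodal basis, and the discrete partition-of-unity identity for the nodal basis.

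For (P1), symmetry is immediate from the definition \eqref{eq:Ahddef}: each piece $\int_{K_\delta} \nabla u^h \cdot \nabla v^h$ and $\int_{K_\delta} (q_0+q_p) u^h v^h$ is a symmetric bilinear form in $(u^h,v^h)$, so $\A^{h,\delta}_p(\phi^{ij},\phi^{kl}) = \A^{h,\delta}_p(\phi^{kl},\phi^{ij})$.

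For (P2), I would use that $A^p_{ijkl}$ is built from integrals over triangles $K$ that simultaneously intersect the supports of $\phi^{ij}$ and $\phi^{kl}$. In the triangulation shown in Fig.~\ref{fig:tri}, the support of $\phi^{ij}$ is exactly the six triangles sharing the vertex $ij$, and two nodal bases share a triangle if and only if their nodes are vertices of a common triangle. Inspecting the six neighbors of $ij$ in the standard uniform triangulation (with diagonals running in the $(1,1)$ direction) one sees that these neighbors are precisely $\dind^{\pm}_1 ij,\, \dind^{\pm}_2 ij,\, \dind^{\pm}_3 ij$, which identifies $\Iinner_{ij}$. All other $kl$ give disjoint supports on every $K_\delta \subset K$ and hence $A^p_{ijkl}=0$.

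The main content is (P3), which is a conservation identity. The idea is to test the discrete bilinear form against the constant function $\mathbf{1}$ on $\overline Y$. Since the full set of (interior and boundary) nodal basis functions forms a partition of unity, $\sum_{kl\in \Icomp}\phi^{kl}(x)\equiv 1$, and since each $\phi^{kl}$ is piecewise linear, I extend the definition of the matrix entries by $A^p_{ij,kl}:=\A^{h,\delta}_p(\phi^{ij},\phi^{kl})$ for all $kl\in\Icomp$ (boundary entries are never used in \eqref{eq:Uepsdis}--\eqref{eq:U0dis} but are well defined). Then
\begin{equation*}
\sum_{kl\in \Icomp} A^p_{ij,kl} \;=\; \A^{h,\delta}_p(\phi^{ij},\mathbf{1}) \;=\; \sum_{K\in\mathcal{T}_h} |K|\cdot \frac{1}{|K_\delta|}\int_{K_\delta}(q_0+q_p)\,\phi^{ij}\,dx,
\end{equation*}
where the gradient part vanishes since $\nabla \mathbf{1}=0$. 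Defining $d^p_{ij}$ to be this right-hand side yields formula \eqref{eq:dpij} referenced in the statement. Applying (P2) (valid also in the extended indexing since $\phi^{ij}\phi^{kl}\equiv 0$ for $kl\notin \Iinner_{ij}$), the sum on the left collapses to $A^p_{ij,ij}+\sum_{s=1}^3\bigl(A^p_{ij,\dind^+_s ij}+A^p_{ij,\dind^-_s ij}\bigr)$, and solving for $A^p_{ij,ij}$ gives \eqref{eq:Apstructure}.

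The one step that requires some care, and which I expect to be the main (though mild) obstacle, is the treatment of nodes $ij$ within one grid step of $\partial Y$: for such $ij$ some neighbor $\dind^{\pm}_s ij$ lies on $\partial Y$, so the corresponding $A^p_{ij,\dind^{\pm}_s ij}$ in \eqref{eq:Apstructure} refers to the extended matrix rather than an entry of the actual stiffness matrix used in the linear system. The cleanest way to handle this uniformly is to work throughout with the extended indexing on $\Icomp$ and simply note that boundary-indexed entries drop out whenever we contract $A^p$ against a coordinate vector supported in $\Iinner$, so nothing in \eqref{eq:Uepsdis}--\eqref{eq:U0dis} is affected.
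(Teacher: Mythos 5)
Your proof is correct and rests on the same key identity as the paper's: the partition-of-unity property of the nodal basis, which cancels the gradient contributions and leaves only the potential term $d^p_{ij}$ of \eqref{eq:dpij}. The only difference is packaging — the paper applies $\sum\phi = 1$ and $\sum\nabla\phi = 0$ locally on each of the six triangles of $\K_{ij}$ and then sums, whereas you test the bilinear form globally against $\mathbf{1}$ — and both treatments handle boundary-adjacent nodes via the same extended-indexing convention that the paper adopts just after the proposition.
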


\begin{figure}
\caption{\itshape \label{fig:element} Left: The support of a basis function $\phi^{ij}$, denoted by $\K_{ij}$. Right: The shrunk integral region $\K_{ij}^\delta$.}
\setlength{\unitlength}{0.8cm}
\begin{center}
\begin{minipage}{0.4\textwidth}
%\begin{center}
\begin{picture}(7,6)
\multiput(2,1)(0.5,0){8}{\line(1,0){0.4}}
\multiput(2,3)(0.5,0){8}{\line(1,0){0.4}}
\multiput(2,5)(0.5,0){8}{\line(1,0){0.4}}
\multiput(2,1)(0,0.5){8}{\line(0,1){0.4}}
\multiput(4,1)(0,0.5){8}{\line(0,1){0.4}}
\multiput(6,1)(0,0.5){8}{\line(0,1){0.4}}
\multiput(2,1)(0.8,0.8){5}{\line(1,1){0.6}}
%\put(1.5,0){$i-1$}
\put(4,0.3){$i$}
%\put(5.5,0){$i+1$}
%\put(0.5,1){$j-1$}
\put(1.5,3){$j$}
%\put(0.5,5){$j+1$}
\put(2,1){\circle{0.2}}
\put(2,3){\circle{0.2}}
\put(4,1){\circle{0.2}}
\put(4,3){\circle*{0.2}}
\put(4,5){\circle{0.2}}
\put(6,3){\circle{0.2}}
\put(6,5){\circle{0.2}}
%\linethickness{1mm}
\thicklines
\multiput(2,3)(2,-2){2}{\line(1,1){2}}
\multiput(2,1)(2,4){2}{\line(1,0){2}}
\multiput(2,1)(4,2){2}{\line(0,1){2}}
\thinlines
%\newsavebox{\shaddows}
%\savebox{\shaddows}(0,0){
\put(2.25,3.25){\line(1,0){3.75}}
\put(2.5,3.5){\line(1,0){3.5}}
\put(2.75,3.75){\line(1,0){3.25}}
\put(3,4){\line(1,0){3}}
\put(3.25,4.25){\line(1,0){2.75}}
\put(3.5,4.5){\line(1,0){2.5}}
\put(3.75,4.75){\line(1,0){2.25}}
\put(2,2.75){\line(1,0){3.75}}
\put(2,2.5){\line(1,0){3.5}}
\put(2,2.25){\line(1,0){3.25}}
\put(2,2){\line(1,0){3}}
\put(2,1.75){\line(1,0){2.75}}
\put(2,1.5){\line(1,0){2.5}}
\put(2,1.25){\line(1,0){2.25}}
%}
\end{picture}
%\end{center}
\end{minipage}
%\hfill
\begin{minipage}{0.4\textwidth}
%\begin{center}
\begin{picture}(7,6)
\multiput(2,1)(0.5,0){8}{\line(1,0){0.4}}
\multiput(2,3)(0.5,0){8}{\line(1,0){0.4}}
\multiput(2,5)(0.5,0){8}{\line(1,0){0.4}}
\multiput(2,1)(0,0.5){8}{\line(0,1){0.4}}
\multiput(4,1)(0,0.5){8}{\line(0,1){0.4}}
\multiput(6,1)(0,0.5){8}{\line(0,1){0.4}}
\multiput(2,1)(0.8,0.8){5}{\line(1,1){0.6}}
\put(4,0.3){$i$}
\put(1.5,3){$j$}
\multiput(2.67,2.33)(2,0){2}{\circle*{0.1}}
\multiput(3.33,3.67)(2,0){2}{\circle*{0.1}}
\put(3.33,1.66){\circle*{0.1}}
\put(4.67,4.33){\circle*{0.1}}
%\linethickness{1mm}
\thicklines
\multiput(2,3)(2,-2){2}{\line(1,1){2}}
\multiput(2,1)(2,4){2}{\line(1,0){2}}
\multiput(2,1)(4,2){2}{\line(0,1){2}}
\newsavebox{\shrinktri}
\savebox{\shrinktri}(0,0){
\thicklines
\put(-0.67,-0.33){\line(1,0){1}}
\put(0.33,-0.33){\line(0,1){1}}
\put(-0.67,-0.33){\line(1,1){1}}
%\thinlines
%\put(-0.33,0){\line(1,0){0.666}}
%\put(0,0.33){\line(1,0){0.33}}
}
\newsavebox{\shrinktris}
\savebox{\shrinktris}(0,0){
\thicklines
\put(-0.67,-0.33){\line(1,0){1}}
\put(0.33,-0.33){\line(0,1){1}}
\put(-0.67,-0.33){\line(1,1){1}}
\thinlines
\put(-0.33,0){\line(1,0){0.666}}
\put(0,0.33){\line(1,0){0.33}}
\put(0.33,-0.16){\line(-1,0){0.83}}
\put(0.33,0.16){\line(-1,0){0.49}}
\put(0.33,0.5){\line(-1,0){0.16}}
}
\multiput(3.333,1.8)(0,2){2}{\usebox{\shrinktri}}
\put(5.333,3.8){\usebox{\shrinktris}}%the choice of the 3.8 works but is weird.
\newsavebox{\shrinkirt}
\savebox{\shrinkirt}(0,0){
\thicklines
\put(-0.33,0.33){\line(1,0){1}}
\put(-0.33,0.33){\line(0,-1){1}}
\put(-0.33,-0.67){\line(1,1){1}}
%\thinlines
%\put(-0.33,0){\line(1,0){0.666}}
%\put(-0.33,-0.33){\line(1,0){0.33}}
}
\multiput(2.666,2.2)(2,0){2}{\usebox{\shrinkirt}}
\put(4.666,4.2){\usebox{\shrinkirt}}
\end{picture}
%\end{center}
\end{minipage}
\end{center}
\end{figure}
\begin{proof} The first two observations are obvious, so only the third one needs to be stressed. According to \eqref{eq:hmsdef} and \eqref{eq:uh0def}, to calculate $A^p_{ijij}$ we need to integrate the function
%\begin{equation*}
$|\nabla \phi^{ij}(x)|^2 + q^p(x) |\phi^{ij}(x)|^2$.
%\end{equation*}
We observe that the support of $\phi^{ij}$, denoted by $\K_{ij}$, is a hexagon consisting of six triangle elements as illustrated in Fig.~\ref{fig:element}-Left. The integration is actually taken over $\K_{ij}^\delta$, the region obtained by shrinking the triangle elements in $\K_{ij}$ with respect to their barycenters as illustrated in Fig.~\ref{fig:element}-Right. Let us consider a typical triangle in $\K_{ij}$ with nodal points $ij, \dind^+_1 ij$ and $\dind^+_3 ij$. Abusing notation, we call it $K$ and the corresponding smaller triangle $K_\delta$. Note $K_\delta$ corresponds to the shaded region in the figure. On this region, the three non-zero basis functions are $\phi^{ij}, \phi^{\dind^+_1 ij}$ and $\phi^{\dind^+_3 ij}$. They satisfy
\begin{equation*}
\phi^{ij} + \phi^{\dind^+_1 ij} + \phi^{\dind^+_3 ij} = 1, \quad \nabla \phi^{ij} + 
\nabla\phi^{\dind^+_1 ij} + \nabla \phi^{\dind^+_3 ij} = 0.
\end{equation*}
Multiply $\phi^{ij}$ on both sides of the first equation, and $\nabla 
\phi^{ij}$ on the second equation. We have
\begin{equation*}
(\phi^{ij})^2 = \phi^{ij} -(\phi^{\dind^+_1 ij} + \phi^{\dind^+_3 ij})\phi^{ij}, \quad 
|\nabla \phi^{ij}|^2 = - (\nabla\phi^{\dind^+_1 ij} + \nabla \phi^{\dind^+_3 ij})\cdot 
\nabla \phi^{ij}.
\end{equation*}
Consequently, we have
\begin{equation*}
\begin{aligned}
\hat{\A}^{h,\delta}_p(\phi^{ij},\phi^{ij})[x_K] &= \aint_{K_\delta} |\nabla \phi^{ij}|^2 + q^p|\phi^{ij}|^2 dx\\
&= \aint_{K_\delta} q^p \phi^{ij} dx - \sum_{s=1,3} \aint_{K_\delta} \nabla \phi^{ij} \cdot \nabla \phi^{\dind^+_s ij} + q^p \phi^{ij} \phi^{\dind^+_s ij} dx\\
&= \aint_{K_\delta} q^p \phi^{ij} dx - \hat{\A}^{h,\delta}_p(\phi^{ij},\phi^{\dind^+_1 ij})[x_K] - \hat{\A}^{h,\delta}_p(\phi^{ij},\phi^{\dind^+_s ij})[x_K].
\end{aligned}
\end{equation*}
Summing over the integrals on all six triangles, and using the notations of $A^p$, $\A^{h,\delta}_p$ and $\hat{\A}^{h,\delta}_p$, $p=0, \eps$, we see that \eqref{eq:Apstructure} holds with
$d^p_{ij}$ defined by
\begin{equation}
d^p_{ij} = \sum_{K \in \K_{ij}} |K|\aint_{K_\delta} q^p \phi^{ij}  dx.
\label{eq:dpij}
\end{equation}
This completes the proof.
%%%%%%%%
\end{proof}

It follows immediately that the matrix $A^p$ acts on vectors in $V^h$ as follows:
\begin{equation*}
(A^p V)_{ij} = \sum_{s = 1}^3 D^+_s (\alpha^{s,p}_{ij} D^-_s 
V_{ij}) + d^p_{ij} V_{ij},
%\label{eq:Aact}
\end{equation*}
where $\alpha^{s,p}_{ij}$ is short-hand notation for $A^p_{ij \dind^-_s ij}$ and it has the expression
\begin{equation*}
\alpha^{s,p}_{ij} := \sum_{K \in \K_{ij}} |K| \aint_{K_\delta} \nabla \phi^{ij} \cdot \nabla \phi^{\dind^-_s ij} + q^p \phi^{ij} \phi^{\dind^-_s ij} dx.
%\label{eq:alphaijdef}
\end{equation*}
Note that when $\dind^\pm_s ij$ lands outside of $\Iinner$, {\it i.e.}~on the boundary, $\phi^{\dind^\pm_s ij}$ is the unique continuous function which is linear on each $K \in \mathcal{T}_h$, has value one at $\dind^\pm_s ij$ and value zero at all other nodal points. Finally, taking the difference of $A^\eps$ and $A^0$ we obtain
\begin{equation}
(A^\eps V - A^0 V)_{ij} = \sum_{s = 1}^3 D^+_s (\alpha^{s}_{\eps ij} D^-_s 
V_{ij}) + d_{\eps ij} V_{ij},
\label{eq:Aact}
\end{equation}
where the vectors $(\alpha^s_{\eps ij})$ and $(d_{\eps ij})$ are
\begin{align}
\alpha^s_{\eps ij} &:= \alpha^{s,\eps}_{ij} - \alpha^{s,0}_{ij} = \sum_{K \in \K_{ij}} |K| \aint_{K_\delta} q_\eps \phi^{ij} \phi^{\dind^-_s ij} dx, \label{eq:alphaeps}\\
d_{\eps ij} &:= d^\eps_{ij} - d^0_{ij} = \sum_{K \in \K_{ij}} |K|\aint_{K_\delta} q_\eps \phi^{ij}  dx. \label{eq:deps}
\end{align}

Formula \eqref{eq:Aact} is essential in our analysis because it provides an explicit expression of the discrete corrector $u^{h,\delta}_\eps - u^{h,\delta}_0$. Identify these solutions with the vectors $(U^{\eps}_{ij})$ and $(U^0_{ij})$ in (\ref{eq:Uepsdis}-\ref{eq:U0dis}). We verify that for all $ij \in \Iinner$,
\begin{equation*}
\sum_{kl} A^0_{ijkl}(U^\eps - U^0)_{kl} = - \sum_{kl} (A^\eps - 
A^0)_{ijkl} U^\eps_{kl}.
\end{equation*}
Let $G = (G_{ijkl})$ be the inverse of $A^0$. Solving the equation above, we get
\begin{equation}
(U^\eps - U^0)_{ij} = - \sum_{kl} G_{ijkl} [(A^\eps-A^0) U^\eps]_{kl}.
\label{eq:diffdis}
\end{equation}
Using the formula \eqref{eq:Aact} and summation by parts, we obtain
\begin{equation*}
\begin{aligned}
(U^\eps - U^0)_{ij} &= - \sum_{kl} G_{ijkl} \sum_{s=1}^3 D^+_s(\alpha^s_{\eps kl} D^-_s U^\eps_{kl}) + \sum_{kl} G_{ijkl} d_{\eps kl} U^\eps_{kl}\\
&= \sum_{kl} \sum_{s=1}^3 (D^-_s G_{ijkl}) (\alpha^s_{\eps kl} D^-_s U^\eps_{kl}) - \sum_{kl} G_{ijkl} d_{\eps kl} U^\eps_{kl}.
\end{aligned}
\end{equation*}
Here and in the sequel, $D^\pm_s$ acts on $G$ as defined in \eqref{eq:Dpmdef} but in the second pair of indices, namely $kl$ here. We can also write this expression as
\begin{align}
(U^\eps - U^0)_{ij} &= \sum_{kl} \sum_{s=1}^3 (D^-_s G_{ijkl}) (\alpha^s_{\eps kl} D^-_s U^0_{kl}) - \sum_{kl} G_{ijkl} d_{\eps kl} U^0_{kl}
\label{eq:decomp}\\
&+ \sum_{kl} \sum_{s=1}^3 (D^-_s G_{ijkl}) [\alpha^s_{\eps kl} D^-_s (U^\eps - U^0)_{kl}] - \sum_{kl} G_{ijkl} d_{\eps kl} (U^\eps - U^0)_{kl}\notag.
\end{align}
This decomposition formula will be the starting point of our analysis in the next section.

%%%%%%%%%%
%%%%%%%%%%
\section{Proof of the Main Results}
\label{sec:proof}

In this section, we prove Theorem \ref{thm:main} using the coordinate representation \eqref{eq:decomp} of the discrete corrector.

We briefly describe the strategy of proof. We first show that $\|U^\eps - U^0\|_{\ell^2}$ is small in mean square when $\eps$ goes to zero while $h$ and $\delta$ are fixed (Lemma \ref{lem:UepsMU0}). This indicates that the first line in the representation \eqref{eq:decomp}, i.e.~the terms that are linear in $\alpha^s_\eps$ and $d_\eps$, is dominant while the second line is asymptotically small (Lemma \ref{lem:rem}). Then to prove the main theorem, using the coordinate representation \eqref{eq:decomp}, we write the normalized corrector integrated with a test function, more precisely its dominant part, as an integral of the highly oscillating random field $q_\eps(x)$ with certain slowly varying function, and invoke the aforementioned theorems in \cite{B-CLH-08,BGGJ-12} to prove the convergence in distribution as $\eps \downarrow 0$ while $h$ and $\delta$ are fixed. Finally, the limit as $h, \delta \downarrow 0$ afterwards with the ratio $\frac{h}{\delta} \ge 1$ fixed boils down to convergence of Gaussian random variables, and the proof is somewhat standard.

\begin{lemma}\label{lem:UepsMU0}
Let $U^\eps_{ij}$ and $U^0_{ij}$ be the coordinates of the 
solutions to the random and the deterministic discrete equations \eqref{eq:hmsdef} 
and \eqref{eq:uh0def} respectively.  Suppose that there exist some 
constants $C>0$ and $\gamma_j \in \R, j= 1,\cdots, 4$, which are possibly negative, so that
\begin{equation}
|D^-_s G_{ijkl}| \le Ch^{\gamma_1}, |D^-_s U^\eps_{ij}| \le 
Ch^{\gamma_2}, |G_{ijkl}| \le Ch^{\gamma_3} \text{and } |U^\eps_{ij}| 
\le Ch^{\gamma_4}
\label{eq:GUDGU}
\end{equation}
for any $s = 1,2,3$ and any indices $ij, kl \in \Iinner$. Let $d=2$. Then the following holds.

\vskip 1em 
{\upshape(1)} If the random process $q$ satisfies the SRC setting, we have
\begin{equation}
\E \|U^\eps - U^0\|_{\ell^2}^2 \le Ch^{2 (\min\{\gamma_1 + \gamma_2, 
\gamma_3 + \gamma_4\})-d} \|R\|_1 \left(\frac{\eps}{\delta}\right)^d.
\label{eq:L2cs}
\end{equation}

{\upshape(2)} If the random process $q$ satisfies the LRC setting, we have
\begin{equation}
\E \|U^\eps - U^0\|_{\ell^2}^2 \le C(\alpha,\kappa)h^{2(\min\{\gamma_1 + \gamma_2, 
\gamma_3 + \gamma_4\})-d} \left(\frac{\eps}{\delta}\right)^\alpha.
\label{eq:L2cl}
\end{equation}
The constant $C$ does not depend on $h, \delta$ or $\eps$. 
\end{lemma}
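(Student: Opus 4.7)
The plan is to exploit the non-recursive form of the representation formula underlying (\ref{eq:decomp}): combining (\ref{eq:diffdis}) with (\ref{eq:Aact}) and one summation by parts gives
\[
(U^\eps - U^0)_{ij} = \sum_{s=1}^3 \sum_{kl \in \Iinner} (D^-_s G_{ijkl})\, \alpha^s_{\eps kl}\, (D^-_s U^\eps_{kl}) \;-\; \sum_{kl \in \Iinner} G_{ijkl}\, d_{\eps kl}\, U^\eps_{kl}.
\]
This is the cleanest starting point because the hypotheses (\ref{eq:GUDGU}) supply pointwise bounds precisely on $G$, $D^-_s G$, $U^\eps$ and $D^-_s U^\eps$. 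Squaring, using $(a+b)^2 \le 2(a^2+b^2)$, taking expectation, and pulling the deterministic factors outside via (\ref{eq:GUDGU}) reduces the task to estimating the purely probabilistic sums $\sum_{kl,k'l'} |\E[\alpha^s_{\eps kl}\alpha^{s'}_{\eps k'l'}]|$ and $\sum_{kl,k'l'} |\E[d_{\eps kl}d_{\eps k'l'}]|$ (and their mixed analogues).

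For the covariances I would apply Cauchy--Schwarz pointwise, $|\E[XY]| \le (\E X^2)^{1/2}(\E Y^2)^{1/2}$, thereby reducing to diagonal variance bounds which can be taken directly from the proof of Theorem \ref{thm:ehmm}. Writing $\alpha^s_{\eps kl}$ and $d_{\eps kl}$ in the form $(|K|/|K_\delta|) \int q_\eps \psi\,dx = (h/\delta)^d \int q_\eps \psi\,dx$ with $\psi$ supported on $K_\delta$ and $|\psi| \le 1$, the estimates (\ref{eq:estHMMS}) and (\ref{eq:estHMML}) yield
\[
\E|\alpha^s_{\eps kl}|^2,\; \E|d_{\eps kl}|^2 \;\le\; C\, h^{2d}\|R\|_{L^1}\Big(\tfrac{\eps}{\delta}\Big)^{d} \text{ (SRC)}, \qquad \le\; C(\alpha,\kappa)\, h^{2d}\Big(\tfrac{\eps}{\delta}\Big)^{\alpha} \text{ (LRC)},
\]
uniformly in $s$ and $kl \in \Iinner$, provided $\eps$ is small enough compared to $\delta$ for the limit theorems in \cite{B-CLH-08,BGGJ-12} underlying (\ref{eq:estHMMS})--(\ref{eq:estHMML}) to yield the quoted variance estimates.

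Plugging these uniform bounds back in, the double sum over $(kl,k'l')$ contributes $(\#\Iinner)^2 \asymp h^{-2d}$, which exactly cancels the $h^{2d}$ from the variance bound, so that
\[
\E\lvert (U^\eps-U^0)_{ij}\rvert^2 \;\le\; C\,h^{2\min\{\gamma_1+\gamma_2,\,\gamma_3+\gamma_4\}}\,\Big(\tfrac{\eps}{\delta}\Big)^{d}\;\text{(resp.\ }(\tfrac{\eps}{\delta})^{\alpha}\text{)},
\]
uniformly in $ij \in \Iinner$. A final sum over $ij$ contributes $\#\Iinner \asymp h^{-d}$ and produces exactly (\ref{eq:L2cs}) and (\ref{eq:L2cl}). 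No step is conceptually hard; the main obstacle is the careful bookkeeping of the powers of $h,\delta,\eps$ as one passes between the definitions (\ref{eq:alphaeps})--(\ref{eq:deps}) of $\alpha^s_\eps,d_\eps$ and the ``bare'' integrals $\int q_\eps \psi$ to which the variance estimates apply. The Cauchy--Schwarz step on the covariances is wasteful at large inter-patch distances, but it suffices here because the double sum has only $O(h^{-2d})$ terms, which is absorbed without loss in the final $h^{-d}$ prefactor coming from the outer sum over $ij$.
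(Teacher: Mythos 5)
Your proposal is correct and follows essentially the same route as the paper: start from the non-recursive representation $(U^\eps-U^0)_{ij}=\sum_{kl,s}(D^-_sG_{ijkl})\alpha^s_{\eps kl}(D^-_sU^\eps_{kl})-\sum_{kl}G_{ijkl}d_{\eps kl}U^\eps_{kl}$, pull out the deterministic factors via \eqref{eq:GUDGU}, and reduce to the uniform variance bounds $\E|\alpha^s_{\eps kl}|^2,\E|d_{\eps kl}|^2\le Ch^{2d}(\eps/\delta)^d$ (resp. $(\eps/\delta)^\alpha$) inherited from the proof of Theorem \ref{thm:ehmm}. The only cosmetic difference is that you apply Cauchy--Schwarz to the individual covariances $\E[\alpha^s_{\eps kl}\alpha^s_{\eps k'l'}]$ whereas the paper applies it to the sum $\sum_{kl}c_{kl}\alpha^s_{\eps kl}$ directly; both yield the same power count and the same final estimates.
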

% remark on the restrictions
\begin{remark} The assumption \eqref{eq:GUDGU} is not a restriction because $\gamma_j$ there can be chosen negative. Indeed, consider a typical triangle $K \in \mathcal{T}_h$, namely the one with vertices $(ij,i-1j,ij+1)$, and a function $v \in P_1(K)$; the $L^2(K)$ norm of $v$ is related to its coordinate by \eqref{eq:P1norm}. Similarly, the $W^{1,q}(K)$ semi-norm of $v$ is related to its coordinates by
\begin{equation}
|v|_{W^{1,q}(K)} = Ch^{{\frac 2 q}-1} \|(D^-_1 V_{ij}, D^-_2 V_{ij+1})\| = Ch^{{\frac 2 q} - 1}\big(|D^-_1 V_{ij}|^2 + |D^-_2 V_{ij+1}|^2\big)^{\frac 1 2}.
\label{eq:DVW}
\end{equation}
This follows from the fact that $\nabla v^h\lvert_K$ is a constant vector $(D^-_1 V_{ij}, D^-_2 V_{ij+1})/h$.

Now for $u^{h,\delta}_\eps$, we know its $H^1$ norm is bounded independent of $h$ and $\eps$. Applying the results above we find that $|U^\eps_{ij}| \le Ch^{-1}$ and $|D^-_1 U^\eps_{ij}| \le C$. Other coordinates of $U^\eps$ and $D^-_s U^\eps$ can be estimated in the same way. Hence, we may choose $\gamma_2 = 0$ and $\gamma_4 = -1$. Similarly, the discrete Green's function $G^h(x,y) = \sum_{ij,kl} G_{ijkl}\phi^{ij}(x)\phi^{kl}(y)$ is known to have $W^{1,q}$ norm for some $q < 2$ bounded by $C|\log h|$ for any fixed $x$; see \cite[Theorem 5.1]{Goldstein80}. Using \eqref{eq:P1norm} and \eqref{eq:DVW} we may choose $\gamma_1$ and $\gamma_3$ properly, say $\gamma_1 = \gamma_3 = -1$. $\Box$
\end{remark}

% proof of the lemma controlling $U^\eps - U^0$
\begin{proof}[Proof of Lemma \ref{lem:UepsMU0}]
Apply the bounds in \eqref{eq:GUDGU} to the representation of $(U^\eps - U^0)_{ij}$ above \eqref{eq:decomp}, and then take expectation and use Cauchy-Schwarz. We get
\begin{equation}
\E \lvert U^\eps - U^0\rvert_{ij}^2 \le C h^{-d} h^{2(\gamma_1+\gamma_2)} \sum_{s=1}^3 \sum_{kl} \E |\alpha^s_{\eps kl}|^2  + C h^{-d} h^{2(\gamma_3+\gamma_4)} \sum_{kl} \E |d_{\eps kl}|^2.
\label{eq:Udiffij}
\end{equation}
Here, $h^{-d}$ is the number of nodal points (up to a factor of $d$), i.e.~$|\Iinner|$. It suffices to estimate $\E |\alpha^s_{\eps kl}|^2$ and $\E |d_{\eps kl}|^2$. We rewrite \eqref{eq:alphaeps} and \eqref{eq:deps} as
\begin{equation}
\alpha^s_{\eps kl} = \int q_\eps(x) a^s_{kl} (x) dx, \quad \quad d_{\eps kl} = \int q_\eps(x) b_{kl} (x) dx,
\label{eq:alphadredef}
\end{equation}
with $a^s_{\eps}$ and $b_\eps$ defined by
\begin{equation}
a^s_{kl}(x) = \sum_{K \in \K_{kl}} \frac{|K|}{|K_\delta|} \chi_{K_\delta} (x) \phi^{kl}(x) \phi^{\dind^-_s kl}(x), \quad b_{kl}(x) = \sum_{K \in \K_{kl}} \frac{|K|}{|K_\delta|} \chi_{K_\delta} (x) \phi^{kl}(x).
\label{eq:abdef}
\end{equation}
Above, $\K_{kl}$ is defined below \eqref{eq:Apstructure}. We check that $|K|/|K_\delta| = (h/\delta)^d$ and that $a^s_{kl}$ and $b_{kl}$ are uniformly bounded on $Y$. Hence, $\alpha^s_{\eps kl}$ and $d_{\eps kl}$ can be recognized as oscillatory integrals of the highly oscillatory random field $q_\eps(x)$ against some slowly varying functions. Such integrals are well understood. In fact, $\alpha^s_\eps$ has the same form as $X^\eps$ in the proof of Theorem \ref{thm:ehmm} and can be estimated in the same manner. In the SRC setting, we have that
\begin{equation}
\E |\alpha^s_{\eps kl}|^2 \le C \eps^d \|R\|_{L^1} \|a^s_{kl}\|_{L^2}^2 \le C \|R\|_{L^1} h^{2d} ({\frac \eps \delta})^d.
\label{eq:bepsbdd}
\end{equation}
In the LRC setting, the above estimate should be replaced by
\begin{equation}
\E |\alpha^s_{\eps kl}|^2 \le C\eps^\alpha \|a^s_{kl}\otimes a^s_{kl}\|_{L^1(Y\times Y, |x-y|^{-\alpha} dxdy)} \le C(\alpha,\kappa) h^{2d} ({\frac \eps \delta})^\alpha.
\label{eq:bepsbddl}
\end{equation}
The mean square of $d_{\eps kl}$ can be similarly estimated. Substitute these estimates into \eqref{eq:Udiffij} to control the mean square of $(U^\eps - U^0)_{ij}$; note that the sum over $kl$ introduces a factor of $h^{-d}$ which is the number of items in the sum. The estimates of $(U^\eps - U^0)_{ij}$ are uniform in $ij$, summation over $ij$ yields the desired results. Note that this additional summation introduces another $h^{-d}$ to the estimates.
\end{proof}

\begin{lemma}\label{lem:rem} Under the same conditions of the previous lemma, we have
\begin{equation}
(U^\eps - U^0)_{ij} = \sum_{kl \in \Iinner} \sum_{s=1}^3 (D^-_s G_{ijkl}) \alpha^s_{\eps kl} (D^-_s U^0_{kl}) - \sum_{kl \in \Iinner} G_{ijkl} d_{\eps kl} U^0_{kl} + r^\eps_{ij}.
\label{eq:decrij}
\end{equation}
Further, the error term $r^\eps_{ij}$ satisfies
\begin{equation}
\sup_{ij \in \Iinner} \E |r^\eps_{ij}| \le
\left\{
\begin{aligned}
&C h^{\min\{\gamma_1,\gamma_3\} + \min\{\gamma_1+\gamma_2, \gamma_3 + \gamma_4\}}\left(\frac{\eps}{\delta}\right)^d, \quad & \text{ in the SRC setting},\\
&C h^{\min\{\gamma_1,\gamma_3\} 
+ \min\{\gamma_1+\gamma_2, \gamma_3 + \gamma_4\}} \left(\frac{\eps}{\delta}\right)^\alpha, \quad &\text{ in the LRC setting},
\end{aligned}
\right.
\label{eq:rijbdd}
\end{equation}
where $\gamma_1,\cdots, \gamma_4$ are as in \eqref{eq:GUDGU} and can be negative.
\end{lemma}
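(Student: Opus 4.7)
Comparing the claimed identity \eqref{eq:decrij} with the already-established decomposition \eqref{eq:decomp}, we see that $r^\eps_{ij}$ must be exactly the second line of \eqref{eq:decomp}, namely
\begin{equation*}
r^\eps_{ij} = \sum_{kl\in\Iinner}\sum_{s=1}^{3}(D^-_s G_{ijkl})\,\alpha^s_{\eps kl}\,D^-_s(U^\eps - U^0)_{kl} \;-\; \sum_{kl\in\Iinner} G_{ijkl}\,d_{\eps kl}\,(U^\eps - U^0)_{kl}.
\end{equation*}
The plan is then to exploit the fact that each term is quadratically small: one oscillatory factor ($\alpha^s_{\eps kl}$ or $d_{\eps kl}$) of order $(\eps/\delta)^{d/2}$ or $(\eps/\delta)^{\alpha/2}$ multiplied by $(U^\eps - U^0)$, which is already of the same small order by Lemma \ref{lem:UepsMU0}. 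Together this should upgrade the half-power of $\eps/\delta$ in the leading terms of \eqref{eq:decrij} to the full power that appears in \eqref{eq:rijbdd}.

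First I would substitute the uniform bounds $|D^-_s G_{ijkl}|\le Ch^{\gamma_1}$ and $|G_{ijkl}|\le Ch^{\gamma_3}$ from \eqref{eq:GUDGU}, then apply Cauchy--Schwarz inside the expectation to split each product, and finally apply Cauchy--Schwarz again to the sum over $kl$, yielding a bound of the form
\begin{equation*}
\E|r^\eps_{ij}| \le Ch^{\gamma_1}\sum_{s=1}^{3}\Big(\sum_{kl}\E|\alpha^s_{\eps kl}|^2\Big)^{1/2}\Big(\sum_{kl}\E|D^-_s(U^\eps-U^0)_{kl}|^2\Big)^{1/2} + Ch^{\gamma_3}\Big(\sum_{kl}\E|d_{\eps kl}|^2\Big)^{1/2}\Big(\E\|U^\eps-U^0\|_{\ell^2}^2\Big)^{1/2}.
\end{equation*}
The sums $\sum_{kl}\E|\alpha^s_{\eps kl}|^2$ and $\sum_{kl}\E|d_{\eps kl}|^2$ are handled directly from the pointwise estimates \eqref{eq:bepsbdd}--\eqref{eq:bepsbddl} already obtained inside the proof of Lemma \ref{lem:UepsMU0}, multiplied by the cardinality $|\Iinner|\sim h^{-d}$; this produces $Ch^{d}(\eps/\delta)^d$ in the SRC case and $Ch^{d}(\eps/\delta)^\alpha$ in the LRC case. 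For the factor involving $D^-_s(U^\eps - U^0)$, the elementary estimate $\sum_{kl}|D^-_s V_{kl}|^2 \le 4\|V\|^2_{\ell^2}$ reduces it to $\E\|U^\eps - U^0\|_{\ell^2}^2$, which Lemma \ref{lem:UepsMU0} bounds by $C h^{2\min\{\gamma_1+\gamma_2,\gamma_3+\gamma_4\}-d}$ times $(\eps/\delta)^d$ or $(\eps/\delta)^\alpha$.

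Multiplying the square roots, the two half-powers of $\eps/\delta$ combine into the full power required in \eqref{eq:rijbdd}, while the $h$-powers telescope cleanly: $h^{d/2}\cdot h^{\min\{\gamma_1+\gamma_2,\gamma_3+\gamma_4\}-d/2} = h^{\min\{\gamma_1+\gamma_2,\gamma_3+\gamma_4\}}$. Adding the two contributions and absorbing the dominant (most negative) prefactor as $h^{\min\{\gamma_1,\gamma_3\}}$ reproduces exactly the bound \eqref{eq:rijbdd}, uniformly in $ij$. The proof is essentially careful bookkeeping once Lemma \ref{lem:UepsMU0} is available; the one subtle point is to pair the Cauchy--Schwarz factors correctly, applying the $\ell^2$ bound on $\alpha^s_{\eps kl}, d_{\eps kl}$ against the $\ell^2(\Iinner)$ bound on $U^\eps - U^0$ rather than attempting pointwise control of either factor. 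No new probabilistic mechanism beyond the SRC/LRC dichotomy already carried out in Lemma \ref{lem:UepsMU0} is needed.
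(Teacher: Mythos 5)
Your proposal is correct and follows essentially the same route as the paper: identify $r^\eps_{ij}$ as the quadratic (second) line of \eqref{eq:decomp}, bound $D^-_s G$ and $G$ pointwise by \eqref{eq:GUDGU}, apply Cauchy--Schwarz twice (once over $kl$, once inside the expectation), and feed in the $\ell^2$ estimates on $\alpha^s_\eps$, $d_\eps$ from \eqref{eq:bepsbdd}--\eqref{eq:bepsbddl} together with Lemma \ref{lem:UepsMU0} so that the two half-powers of $\eps/\delta$ combine into the full power. The power counting in $h$ and the absorption of the prefactor into $h^{\min\{\gamma_1,\gamma_3\}}$ match the paper's computation exactly.
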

\begin{proof} The decomposition holds with
\begin{equation}
r^\eps_{ij} = \sum_{s=1}^3 \sum_{kl \in \Iinner} (D^-_s G_{ijkl}) \alpha^{s}_{\eps kl} D^-_s (U^\eps_{kl} - U^0)_{kl} - \sum_{kl \in \Iinner} G_{ijkl} d_{\eps kl} (U^\eps - U^0)_{kl}.
\label{eq:rijdef}
\end{equation}
Bound the $D^-_s G_{ijkl}$ and $G_{ijkl}$ terms by \eqref{eq:GUDGU}, and use Cauchy-Schwarz. We get
\begin{equation*}
|r^\eps_{ij}| \le Ch^{\gamma_1} \sum_{s=1}^3 \|\alpha^s_\eps\|_{\ell^2} \|D^-_s (U^\eps - U^0)\|_{\ell^2} + Ch^{\gamma_3} \|d_\eps\|_{\ell^2} \|U^\eps - U^0\|_{\ell^2}.
\end{equation*}
Note that $\|D^-_s (U^\eps - U^0)\|_{\ell^2}^2 \le C\|U^\eps - U^0\|_{\ell^2}$. Take expectation and use Cauchy-Schwarz again to get
\begin{equation}
\E |r^\eps_{ij}| \le Ch^{\gamma_1} \sum_{s=1}^3 \left(\E \|\alpha^s_\eps\|_{\ell^2}^2 \E \|U^\eps - U^0\|_{\ell^2}^2\right)^{\frac 1 2} + Ch^{\gamma_3} \left(\E \|d_\eps\|_{\ell^2}^2  \E \|U^\eps - U^0\|_{\ell^2}^2 \right)^{\frac 1 2}.
\label{eq:repsijbdd}
\end{equation}
Summing over $kl$ in the estimates \eqref{eq:bepsbdd} and \eqref{eq:bepsbddl}, we have
\begin{equation*}
\E \|\alpha^s_\eps\|_{\ell^2}^2 \le
\left\{
\begin{aligned}
&C \|R\|_{L^1} h^d ({\frac \eps \delta})^d, \quad & \text{ in the SRC setting},\\
&C(\alpha, \kappa) h^d ({\frac \eps \delta})^\alpha,  \quad &\text{ in the LRC setting}.
\end{aligned}
\right.
\end{equation*}
The same estimates hold also for $\E\|d_\eps\|_{\ell^2}^2$. Substituting these estimates, together with \eqref{eq:L2cs} and \eqref{eq:L2cl}, into \eqref{eq:repsijbdd} completes the proof.
\end{proof}

%%%%%%%%% Proof of the main theorem
Now we prove the main theorem of the paper. Let $\G^{h,\delta}$ denote the solution operator of the discrete equation \eqref{eq:uh0def} which corresponds to the homogenized equation \eqref{eq:hpde}. Using the coordinate representation, the solution to \eqref{eq:uh0def} is then
\begin{equation}
\G^{h,\delta} f(x) = \sum_{ij \in \Iinner} \Big(\sum_{kl \in \Iinner} G_{ijkl} \langle f, \phi^{kl}\rangle\Big) \phi^{ij}(x).
\label{eq:Ghddef}
\end{equation}

\begin{proof}[Proof of Theorem \ref{thm:main}] Take any test function $\varphi \in \mathcal{C}^2(\overline{Y})$. Let $m^h$ denote the function $\G^{h,\delta} \varphi$. Its coordinate vector $(M_{ij})$ is then $M_{ij} = \sum_{kl} G_{ijkl} \langle \varphi, \phi^{kl}\rangle$ thanks to \eqref{eq:Ghddef}. Let $\beta = d$ in the SRC setting and $\beta = \alpha$ in the LRC setting. By \eqref{eq:decrij}, we have
\begin{align}
&\frac{1}{\sqrt{\eps^\beta}}  \int_Y \varphi(x)[u^{h,\delta}_\eps-u^{h,\delta}_0] dx = \frac{1}{\sqrt{\eps^\beta}} \sum_{ij} (U^\eps - U^0)_{ij} \langle \varphi, \phi^{ij} \rangle \notag\\
=& \frac{1}{\sqrt{\eps^\beta}} \sum_{ij} \left(\sum_{kl} \sum_{s=1}^3 (D^-_s G_{ijkl}) \alpha^s_{\eps kl} (D^-_s U^0_{kl}) - \sum_{kl} G_{ijkl} d_{\eps kl} U^0_{kl} + r^\eps_{ij} \right) \langle \varphi, \phi^{ij} \rangle \notag \\
=& \frac{1}{\sqrt{\eps^\beta}} \left[\sum_{kl} \sum_{s=1}^3 (D^-_s M_{kl}) \alpha^s_{\eps kl} (D^-_s U^0_{kl}) - \sum_{kl} M_{kl} d_{\eps kl} U^0_{kl} \right] + \frac{1}{\sqrt{\eps^\beta}} \sum_{ij} r^\eps_{ij} \langle \varphi, \phi^{ij}\rangle. \label{eq:udiffphi}
\end{align}
In the last equality, we used the fact that $G_{ijkl} = G_{klij}$ and recognized the coordinate $M_{kl}$.

{\itshape First convergence as $\eps \to $ 0 while $h$ and $\delta$ are fixed.} Let us control the last term in \eqref{eq:udiffphi}. Thanks to the estimate \eqref{eq:rijbdd}, we have
\begin{equation}
\E \left\lvert \frac{1}{\sqrt{\eps^\beta}} \sum_{ij \in \Iinner} r^\eps_{ij} \langle \varphi, \phi^{ij}\rangle \right\rvert \le \frac{1}{\sqrt{\eps^\beta}} \sup_{ij \in \Iinner} \big(\E \lvert r^\eps_{ij} \rvert \big) \sum_{ij} \lvert \langle \varphi, \phi^{ij}\rangle\rvert \le C(h,\delta) \|\varphi\|_{L^1} \sqrt{\eps^\beta}.
\label{eq:IIIbdd}
\end{equation}
Above $C(h,\delta)$ is a constant, say some negative powers of $h$ and $\delta$. As $\eps$ goes to zero while $h$ and $\delta$ are fixed, the term above converges to zero in $L^1(\Pb)$ and does not contribute to the limiting distribution of \eqref{eq:udiffphi}. The other terms there are linear in $(\alpha^s_{\eps kl})$ and $(d_{\eps kl})$. By \eqref{eq:abdef}, we find that
\begin{equation}
\begin{aligned}
\frac{1}{\sqrt{\eps^\beta}}  \int_Y \varphi(x)[u^{h,\delta}_\eps-u^{h,\delta}_0] dx &\simeq \frac{1}{\sqrt{\eps^\beta}} \int_Y q_\eps(x) L^{h,\delta}_1(x) dx + \frac{1}{\sqrt{\eps^\beta}} \int_Y q_\eps(x) L^{h,\delta}_2(x) dx\\
&= \frac{1}{\sqrt{\eps^\beta}} \int_Y q_\eps(x) L^{h,\delta}(x) dx.
\end{aligned}
\label{eq:wcorRep}
\end{equation}
Here, $L^{h,\delta}_j$, $j=1,2$ and $L^{h,\delta} = L^{h,\delta}_1 + L^{h,\delta}_2$ depend on $\varphi$ through $M$ and are defined by
\begin{equation}
\begin{aligned}
&L^{h,\delta}_1(x) = \sum_{kl} \sum_{s=1}^3 (D^-_s M_{kl}) (D^-_s U^0_{kl}) a^s_{kl}(x)\\
=\ &\sum_{kl} \sum_{K \in \K_{kl}} \frac{|K|}{|K_\delta|} \chi_{K_\delta} (x) \sum_{s=1}^3 (D^-_s M_{kl})(D^-_s U^0_{kl}) \phi^{kl}(x) \phi^{\dind^-_s kl}(x),\label{eq:Lhdef}\\
&L^{h,\delta}_2(x) = -\sum_{kl} b_{kl}(x) M_{kl} U^0_{kl} = -\sum_{kl} \sum_{K \in \K_{kl}} \frac{|K|}{|K_\delta|} \chi_{K_\delta} (x) M_{kl} U^0_{kl} \phi^{kl}(x)\\
=\ &-\sum_{K \in \mathcal{T}_h} \frac{|K|}{|K_\delta|}  \chi_{K_\delta} (x) \sum_{kl \in \Iinner_K} M_{kl} U^0_{kl} \phi^{kl}(x) = -\sum_{K \in \mathcal{T}_h} \frac{|K|}{|K_\delta|}  \chi_{K_\delta} (x) \varPi^h (m^h u^{h,\delta}_0) (x).
\end{aligned}
\end{equation}
Here, $\Iinner_K = \{kl \in \Iinner ~|~ (kh,lh) \in \overline{K}\}$ and $\varPi^h (m^h u^{h,\delta}_0)$ is the projection in $V^h$ of the function $m^h u^{h,\delta}_0$. Now the convergence results \eqref{eq:main1} and \eqref{eq:main3} of Theorem \ref{thm:main} follow from the representation \eqref{eq:wcorRep} and the aforementioned results on limiting distribution of oscillatory integrals, namely Theorem 3.8 of \cite{B-CLH-08} for the SRC setting and Lemma 4.3 of \cite{BGGJ-12} for the LRC setting.

{\itshape Second convergence as $h, \delta \downarrow 0$ with $h/\delta \ge 1$ fixed, SRC setting}. Now we prove \eqref{eq:main2}. It concerns the limiting distribution, as $h$ goes to zero, of the Gaussian random variable which is obtained as the limiting distribution in the first step.

We have the following key observation:
\begin{equation}
L^{h,\delta}_1 \longrightarrow 0 \text{ in $L^\infty(Y)$ as $h \to 0$.}
\label{eq:Lh1to0}
\end{equation}
Indeed, for any fixed $x \in Y$, since $|\phi^{ij}| \le 1$ uniformly and $|K|/|K_\delta| = (h\delta^{-1})^2$, we have
\begin{equation*}
\lvert L^{h,\delta}_1(x) \rvert \le C \left(\frac{h}{\delta}\right)^2 h^2 \sum_{s=1}^3 \left\| \frac{D^-_s M_{kl}}{h}\right\|_{\ell^2} \left\| \frac{D^-_s U^0_{kl}}{h}\right\|_{\ell^2} \le C \left(\frac{h}{\delta}\right)^2 h^2 \lvert m^h \rvert_{H^1} \lvert u^{h,\delta}_0 \rvert_{H^1}.
\end{equation*}
Since $u^{h,\delta}_0$ and $m^h$ are yielded form the scheme \eqref{eq:uh0def} for smooth right hand side $f$ and $\varphi$, they have bounded $H^1$ norms. We assume that the ratio $h/\delta$ is fixed while $h$ is sent to zero. Therefore, the above estimate shows that $L^{h,\delta}_1$ goes to zero uniformly, proving the claim.

According to \eqref{eq:wcorRep}, the left hand side of \eqref{eq:main2} can be written as
\begin{equation}
\sigma \int_Y L^{h,\delta}_1(x) dW(x) + \sigma \int_Y L^{h,\delta}_2(x) dW(x).
\label{eq:2Cdec}
\end{equation}
To prove \eqref{eq:main2}, it suffices to show that the second term above converges to the right hand side of \eqref{eq:main2} while the first term above converges in probability to zero. Since all random variables involved are Gaussian, we only need to calculate their variances. Thanks to It\^o's isometry, we have
\begin{equation*}
\Var \ \sigma \int_Y L^{h,\delta}_1(x) dW(x) = \sigma^2 \int_Y \lvert L^{h,\delta}_1(x) \rvert^2 dx.
\end{equation*}
Due to \eqref{eq:Lh1to0}, the above variance goes to zero, proving our claim for the first term. For the second one, we have again
\begin{equation*}
\Var \ \sigma \int_Y L^{h,\delta}_2(x) dW(x) = \sigma^2 \int_Y \lvert L^{h,\delta}_2(x) \rvert^2 dx = \left(\frac{\sigma h}{\delta}\right)^2 \sum_{K \in \mathcal{T}_h}  |K|\aint_{K_\delta} \lvert \varPi^h (m^h u^{h,\delta}_0) (x) \rvert^2 dx.
\end{equation*}
We recognize the sum in the last term as a barycenter approximation of the integral that gives the $L^2$ norm square of $\varPi^h (m^h u^{h,\delta}_0)$. Thanks to Lemma \ref{lem:hmscov} below, $\|\varPi^h (m^h u^{h,\delta}_0)\|_{L^2}$ converges to $\|u_0 \G \varphi\|_{L^2}$ by applying \eqref{eq:muh2mu0} with $f_1 = \varphi$ and $f_2 = f$. This implies that the variance of the second term in \eqref{eq:2Cdec} converges to $(\sigma h/\delta)^2 \|u_0 \G \varphi\|_{L^2}^2$, proving \eqref{eq:main2}.

%%%%%%%
{\itshape Second convergence as $h, \delta \to 0$ with $h/\delta \ge 1$ fixed, LRC setting.} Now we prove \eqref{eq:main4}. Like in \eqref{eq:2Cdec}, we can write the left hand side of \eqref{eq:main4} as a sum of two Gaussian random variables. Using a modified isometry, we write the variance of the first variable as
\begin{equation*}
\Var \ \sigma \int_Y L^{h,\delta}_1(x) W^\alpha(dx) = \iint_{Y^2}  \frac{\kappa L^{h,\delta}_1(x) L^{h,\delta}_1(y)}{|x-y|^\alpha}dxdy = \mathscr{I}(L^{h,\delta}_1).
%\le C(\alpha) \|L^{h,\delta}_1\|^2_{L^{\frac 4 {4-\alpha}}}.
\end{equation*}
Here, we define the operator $\mathscr{I}: L^{\frac 4 {4-\alpha}} \to \R$ as
\begin{equation}
\mathscr{I}(g) : = \|g\otimes g\|_{L^1(Y^2, \kappa |x-y|^{-\alpha} dx dy)} = \iint_{Y^2} \frac{\kappa g(x) g(y)}{|x-y|^{\alpha}} dx dy.
\label{eq:SIdef}
\end{equation}
Recalling the Hardy-Littlewood-Sobolev inequality, Theorem 4.3 of \cite{LL-A}, we have
\begin{equation}
\lvert \mathscr{I}(g) \rvert \le \kappa C(\alpha) \|g\|^2_{L^{\frac 4 {4-\alpha}}}.
\label{eq:SIcont}
\end{equation}
Due to \eqref{eq:Lh1to0}, the term in the equation above \eqref{eq:SIdef} converges to zero and doesn't contribute to the limiting distribution. For the the term with $L^{h,\delta}_2$, we have
\begin{equation*}
\begin{aligned}
\Var \ \sigma \int_Y L^{h,\delta}_2(x) W^\alpha(dx) &= \iint_{Y^2}  \frac{\kappa L^{h,\delta}_2(x) L^{h,\delta}_2(y)}{|x-y|^\alpha}dxdy\\
&= \sum_{K \in \mathcal{T}_h} \sum_{K' \in \mathcal{T}_h} |K|^2 \aint_{K_\delta} \aint_{K'_\delta} \frac{\kappa \varPi^h (m^h u^{h,\delta}_0)(x) \varPi^h (m^h u^{h,\delta}_0)(y)}{|x-y|^\alpha} dx dy.
\end{aligned}
\end{equation*}
We recognize the last sum as the barycenter approximation of $\mathscr{I}(\varPi^h (m^h u^{h,\delta}_0))$. Now \eqref{eq:SIcont} shows that $\mathscr{I}$ is continuous on $L^{\frac 4 {4-\alpha}}$. Since $\alpha < 2$ and $\frac 4 {4-\alpha} < 2$, we have the inclusion $L^2(Y) \subset L^{\frac 4 {4-\alpha}}(Y)$. Therefore $\mathscr{I}$ is also continuous on $L^2(Y)$. Applying \eqref{eq:muh2mu0} with $f_1 = \varphi$ and $f_2 = f$, we conclude that $\mathscr{I}(\varPi^h (m^h u^{h,\delta}_0))$ converges to $\mathscr{I}(u_0 \G \varphi)$. This proves \eqref{eq:main4} and completes the proof of the theorem.
%%%%%%%%%%%
\end{proof}

It remains to prove the following key lemma concerning the convergence of product of solutions yielded from the discrete equation \eqref{eq:uh0def}.

\begin{lemma} \label{lem:hmscov} Let $\G^{h,\delta}$ be the Green's operator of the scheme \eqref{eq:uh0def}. For any two functions $f_j \in \mathcal{C}^2(\overline{Y})$, $j = 1, 2$, let $\varPi^h ( \G^{h,\delta} f_1 \G^{h,\delta} f_2)$ be the projection in $V^h$ of the product of $\G^{h,\delta} f_1$ and $\G^{h,\delta} f_2$. We have that
\begin{equation}
\varPi^h ( \G^{h,\delta} f_1 \G^{h,\delta} f_2) \xrightarrow{\ \ L^2\ } \G f_1 \G f_2, \quad \text{ as $h \to 0$ with $h/\delta \ge 1$ fixed.}
\label{eq:muh2mu0}
\end{equation}
As before, $\G$ above is the Green's operator of the homogenized equation \eqref{eq:hpde}.
\end{lemma}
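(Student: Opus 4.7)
The plan is to decompose
\begin{equation*}
\varPi^h(\G^{h,\delta} f_1 \, \G^{h,\delta} f_2) - \G f_1 \, \G f_2
= \bigl[\varPi^h(w_1^h w_2^h) - w_1^h w_2^h\bigr] + \bigl[w_1^h w_2^h - w_1 w_2\bigr],
\end{equation*}
where $w_j^h := \G^{h,\delta} f_j$ and $w_j := \G f_j$, and to show each bracket vanishes in $L^2(Y)$. Two underlying facts drive the argument. First, applying Theorem~\ref{thm:wellposed} directly to the homogenized scheme \eqref{eq:uh0def} with right-hand sides $f_j$ (formally $\eps = 0$, so the $e(\HMM)$ term drops out) yields $\|w_j^h - w_j\|_{H^1} \le Ch$. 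Second, since $f_j \in \mathcal{C}^2(\overline Y)$ and $q_0 \in \mathcal{C}^1(\overline Y)$, elliptic regularity gives $w_j \in H^2(Y) \cap H^1_0(Y)$, so in particular $\|w_j\|_{L^\infty}$ is finite by the two-dimensional Sobolev embedding.

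For the second bracket I would split $w_1^h w_2^h - w_1 w_2 = (w_1^h - w_1)w_2^h + w_1(w_2^h - w_2)$ and apply H\"older. The first piece is $\le \|w_1^h - w_1\|_{L^4}\,\|w_2^h\|_{L^4}$; here $\|w_2^h\|_{L^4}$ is uniformly bounded via the embedding $H^1(Y) \hookrightarrow L^4(Y)$, and $\|w_1^h - w_1\|_{L^4} \to 0$ using the Gagliardo-Nirenberg inequality $\|v\|_{L^4} \le C\|v\|_{L^2}^{1/2}\|v\|_{H^1}^{1/2}$ combined with the $H^1$-convergence. The second piece is bounded by $\|w_1\|_{L^\infty}\|w_2^h - w_2\|_{L^2}$, which tends to zero.

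For the first bracket, on each triangle $K \in \mathcal{T}_h$ the product $w_1^h w_2^h$ is a quadratic polynomial (being a product of two affine functions), so the standard $P_1$-interpolation estimate gives
\begin{equation*}
\|w_1^h w_2^h - \varPi^h(w_1^h w_2^h)\|_{L^2(K)}^2 \le C h^4 |w_1^h w_2^h|_{H^2(K)}^2 \le C h^4 |K| \, |\nabla w_1^h|_K^2 \, |\nabla w_2^h|_K^2,
\end{equation*}
since the only non-vanishing second derivatives of the product are the cross terms $\partial_i w_1^h \, \partial_j w_2^h + \partial_j w_1^h \, \partial_i w_2^h$, each constant on $K$. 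Summing over $K$ produces the global bound $Ch^4 \int_Y |\nabla w_1^h|^2 |\nabla w_2^h|^2 \, dx \le Ch^4 \|\nabla w_1^h\|_{L^4(Y)}^2 \|\nabla w_2^h\|_{L^4(Y)}^2$. Applying the two-dimensional inverse estimate $\|\nabla v^h\|_{L^4(Y)} \le Ch^{-1/2}\|\nabla v^h\|_{L^2(Y)}$ valid for $P_1$ functions, together with the uniform $H^1$-bound on $w_j^h$, leaves an $O(h^2)$-bound on the squared $L^2$-norm, hence $O(h)$ in $L^2$; combined with the previous paragraph this closes the argument.

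The delicate step is this last interpolation-error bound: the naive estimate $\|\varPi^h g - g\|_{L^2} \lesssim h\|g\|_{H^1}$ applied to $g = w_1^h w_2^h$ would require $W^{1,4}$-control of each factor, which is not immediate from the $H^1$-level theory available for the scheme. The fix is to work triangle-by-triangle, exploit the piecewise-quadratic structure explicitly, and trade the $h^4$ gain from interpolation against the $h^{-1}$ loss from the squared inverse estimate; the balance closes cleanly in dimension two.
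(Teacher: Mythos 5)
Your proof is correct, but it follows a genuinely different route from the paper's. The paper proves this lemma by invoking the maximum-norm error theory for finite element methods with numerical quadrature (Theorem 5.1 of \cite{Goldstein80}): it gets $\|\tilde u^h_j - u_j\|_{L^\infty} \le Ch^2|\log h|$ and a uniform $W^{1,\infty}$ bound on $\tilde u^h_j = \G^{h,\delta}f_j$, from which the convergence of the product in $L^\infty$ and the interpolation error $\|\tilde u^h_1\tilde u^h_2 - \varPi^h(\tilde u^h_1\tilde u^h_2)\|_{L^2} \le Ch\|\tilde u^h_1\tilde u^h_2\|_{W^{1,\infty}}$ follow at once. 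You instead stay entirely at the energy level: the $H^1$ rate from Theorem \ref{thm:wellposed} (with the $e(\HMM)$ term dropped) handles the bracket $w_1^hw_2^h - w_1w_2$ via the two-dimensional embeddings $H^1\hookrightarrow L^4$ and $H^2\hookrightarrow L^\infty$, and the interpolation bracket is handled by exploiting that $w_1^hw_2^h$ is piecewise quadratic with only cross-term second derivatives, trading the $h^4$ interpolation gain against the $h^{-1}$ loss from the inverse estimate $\|\nabla v^h\|_{L^4}\le Ch^{-1/2}\|\nabla v^h\|_{L^2}$ (legitimate here since the mesh is uniform). Both arguments are sound; yours is more elementary and self-contained, avoiding the delicate $L^\infty$/$W^{1,\infty}$ theory for schemes with variational crimes, while the paper's yields a sharper quantitative rate ($h^2|\log h|$ in $L^\infty$ for the product versus your $O(h)$ in $L^2$) — though for the purposes of Theorem \ref{thm:main} only the qualitative $L^2$ convergence is needed, so either suffices. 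One minor simplification: your Gagliardo--Nirenberg step is unnecessary, since $\|w_1^h-w_1\|_{L^4}\le C\|w_1^h-w_1\|_{H^1}\to 0$ directly by Sobolev embedding.
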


\begin{proof} To simplify notation, let us denote the function $\G^{h,\delta} f_j$ by $\tilde{u}^h_j$, the functions $\G f_j$ by $u_j$, $j=1,2$.

The key to the proof relies on $L^\infty$ error estimates for finite element methods. Such results are classic for the scheme with $h=\delta$ as proved in \cite{Scott82,Scott-76}. For $\delta < h$, as explained before we may view the scheme as the standard finite element method with (barycenter) numerical quadrature for evaluation of integrations. $L^\infty$ error estimates for such practical schemes are more involved but were obtained in \cite{Wahlbin78, Goldstein80}. In particular, the piecewise linear FEM with numerical quadrature was considered in Theorem 5.1 of \cite{Goldstein80}, which shows
\begin{equation*}
\|\tilde{u}^h_j - u_j\|_{L^\infty} \le Ch^2|\log h| \|f_j\|_{W^{2,\infty}}.
\end{equation*}
Since $\tilde{u}^h_j$, $j=1,2$, are bounded, the above also implies that
\begin{equation}
\|\tilde{u}^h_1 \tilde{u}^h_2 - u_1 u_2\|_{L^\infty} \le Ch^2|\log h| \|f_j\|^2_{W^{2,\infty}}.
\label{eq:uhjLinf}
\end{equation}

In fact, Theorem 5.1 of \cite{Goldstein80} also shows that
\begin{equation*}
\|\tilde{u}^h_j\|_{W^{1,\infty}} \le \|u^h_j\|_{W^{1,\infty}} + Ch|\log h| (\|u_j\|_{W^{2,\infty}} + \|f_j\|_{W^{2,\infty}}).
\end{equation*}
Here, $u^h_j$ is the FEM solution with $h=\delta$. The above estimate shows that $\tilde{u}^h_j$ is in $W^{1}_\infty$. Since $u_j$ are bounded, we check that $\tilde{u}^h_1 \tilde{u}^h_2 \in W^{1}_\infty$. From classical interpolation estimates, e.g. taking $k=m=0$, $p=\infty$ and $q=2$ in Theorem 3.1.6 of \cite{Ciarlet78}, we have
\begin{equation*}
\|\tilde{u}^h_1 \tilde{u}^h_2 - \varPi^h_K (\tilde{u}^h_1 \tilde{u}^h_2)\|_{L^2(K)} \le C|K|^{\frac 1 2} h |\tilde{u}^h_1 \tilde{u}^h_2|_{W^{1,\infty}(Y)}.
\end{equation*}
Here, $\varPi^h_K$ is the projection on the triangle element $K$. Summing over $K \in \mathcal{T}_h$, we have
\begin{equation}
\|\tilde{u}^h_1 \tilde{u}^h_2 - \varPi^h (\tilde{u}^h_1 \tilde{u}^h_2)\|_{L^2(Y)} \le C h \|\tilde{u}^h_1 \tilde{u}^h_2\|_{W^{1,\infty}}.
\label{eq:uhjint}
\end{equation}
Note that \eqref{eq:uhjLinf} controls $\|\tilde{u}^h_1 \tilde{u}^h_2\|_{W^{1,\infty}}$. Sending $h$ to zero, we finish the proof.
\end{proof}
%%%%%%%%%%%%
%%%%%%%%%%%%
\section*{Acknowledgments} The authors would like to thank the reviewers for a thorough reading of the manuscript and remarks that helped with the presentation of the results.
%%%%%%%%%%%%
%%%%%%%%%%%%

\end{document}